\def\tb{\textbf}
\def\mb{\mathbb}
\def\a{{\alpha}}
\def\E{{\mathcal{E}}}
\def\F{{\mathcal{F}}}
\def\G{{\mathcal{G}}}
\def\L{{\mathcal{L}}}
\def\M{{\mathcal{M}}}
\def\o{{\omega}}
\def\O{{\mathcal{O}}}
\def\Z{{\mathbb{Z}}}
\def\Q{{\mathbb{Q}}}
\def\ol{\overline}
\def\Pic{{\mathrm{Pic}}}
\def\rank{{\mathrm{rank}}}
\theoremstyle{plain}
\newtheorem{thm}{Theorem}[section]
\newtheorem{prop}[thm]{Proposition}
\newtheorem{conj}[thm]{Conjecture}
\newtheorem{lem}[thm]{Lemma}
\theoremstyle{definition}
\theoremstyle{remark}
\newtheorem*{cl}{Claim}
\newtheorem*{acknowledgement}{Acknowledgments}
\title{Iitaka's $C_{n,m}$ conjecture for 3-folds in positive characteristic}
\author{Sho Ejiri}
\address{Sho Ejiri\\Department of Mathematics, Graduate School of Science, Osaka University, Toyonaka, Osaka 560-0043, Japan}
\email{s-ejiri@cr.math.sci.osaka-u.ac.jp}
\author{Lei Zhang}
\address{Lei Zhang\\School of Mathematical Science, University of Science and Technology of China, Hefei 230026, P.R.China.}
\email{zhleimath@163.com, lzhpkutju@gmail.com}
\begin{document}
\tolerance = 9999

\maketitle
\markboth{SHO EJIRI AND LEI ZHANG}{IITAKA'S CONJECTURE FOR 3-FOLDS IN CHARACTERISTIC $p>5$}
\begin{abstract}
In this paper, we prove that for a fibration $f:X\to Z$ from a smooth projective 3-fold to a smooth projective curve, over an algebraically closed field $k$ with $\mathrm{char} k =p >5$, if the geometric generic fiber $X_{\ol\eta}$ is smooth, then subadditivity of Kodaira dimensions holds, i.e.
$$\kappa(X)\ge\kappa(X_{\ol\eta})+\kappa(Z).$$
\end{abstract}
\section{Introduction} \label{section:intro}

Throughout this paper, a {\it fibration} means a projective morphism $f:X\to Y$
between varieties such that the natural morphism $\O_Y \to f_*\O_X$ is an isomorphism.

The Kodaira dimension is one of the most important birational invariants and plays a key role in the birational classification of algebraic varieties.
For a fibration, we have the following conjecture on Kodaira dimensions, which was proposed by Iitaka in characteristic zero.
\begin{conj}[\textup{$C_{n,m}$}]\label{conj:Iitaka}\samepage
Let $f:X\to Z$ be a fibration between smooth projective varieties of dimension $n$ and $m$ respectively over an algebraically closed field $k$ with $\mathrm{char} k = p \ge 0$ , whose geometric generic fiber $X_{\ol\eta}$ is integral and smooth.
Then
$$\kappa(X)\ge\kappa(X_{\ol\eta})+\kappa(Z).$$
\end{conj}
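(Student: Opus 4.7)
The plan is to adapt the classical Viehweg--Kawamata strategy for $C_{n,m}$ to the positive-characteristic setting. The guiding decomposition is $K_X = K_{X/Z} + f^*K_Z$: the base term $K_Z$ should contribute $\kappa(Z)$ to $\kappa(X)$ once $K_Z$ is arranged to be big, while the relative term $K_{X/Z}$ should contribute $\kappa(X_{\ol\eta})$ through positivity of the direct images $f_*\omega_{X/Z}^{\otimes N}$ for a suitably divisible $N$.

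First I would reduce to the case $\kappa(Z) = \dim Z$. Take a smooth birational model $Z'$ on which the Iitaka fibration $h : Z' \to W$ of $Z$ is a morphism with $W$ of general type, pull $f$ back to $Z'$, and consider the composite $X \times_Z Z' \to W$. Since the general fiber of $h$ has Kodaira dimension $0$, this reduces the original statement to the base-of-general-type case, modulo a weaker instance of $C_{n,m}$ with base of Kodaira dimension zero. This reduction needs resolution of singularities or, in higher dimension, de Jong alterations; one must also check that inseparable base change does not degrade the geometric generic fiber, which is controlled by the hypothesis that $X_{\ol\eta}$ is smooth.

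The core step is to prove weak positivity of $f_*\omega_{X/Z}^{\otimes N}$ in Viehweg's sense: for some ample $A$ on $Z$ and all $a \gg 0$, the sheaf $S^{a}(f_*\omega_{X/Z}^{\otimes N})\otimes A$ is generically globally generated. Granting this, tensoring with $f^*\omega_Z^{\otimes N}$ (big by the reduction) and taking global sections, together with the fact that the generic rank $\rank f_*\omega_{X/Z}^{\otimes M} = h^0(X_{\ol\eta},\omega_{X_{\ol\eta}}^{\otimes M})$ grows like $M^{\kappa(X_{\ol\eta})}$, yields
\[
h^0(X,\omega_X^{\otimes M}) \ge c\, M^{\dim Z + \kappa(X_{\ol\eta})} = c\, M^{\kappa(Z)+\kappa(X_{\ol\eta})}
\]
for some $c>0$ and all sufficiently divisible $M$, which is exactly the desired bound on $\kappa(X)$.

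The principal obstacle --- and the reason $C_{n,m}$ remains open at this generality --- lies in establishing weak positivity of $f_*\omega_{X/Z}^{\otimes N}$ in characteristic $p>0$ without restrictions on $n$, $m$ or $p$. The characteristic-zero proof rests on semipositivity of Hodge bundles and on semistable reduction, both of which can fail in characteristic $p$: Hodge-to-de Rham degeneration may break, and semistable reduction in high dimension is unavailable. A Frobenius-based substitute would have to be built from the Cartier/trace maps $F^e_*\omega_{X/Z} \to \omega_{X/Z}$ and from $F$-singularity theory (work of Patakfalvi, Hacon--Patakfalvi--Xu, and the first author), combined with a sufficiently developed relative MMP. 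At present these techniques mesh cleanly only in low dimension and for $p$ not too small, which is precisely why the main theorem of this paper is confined to $\dim X = 3$, $\dim Z = 1$, and $p>5$. Removing those restrictions uniformly --- i.e.\ producing a characteristic-free weak positivity theorem for $f_*\omega_{X/Z}^{\otimes N}$ --- is the decisive hurdle, and would close the proof in the structure outlined above.
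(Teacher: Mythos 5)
Your proposal does not prove the statement: by your own account the decisive step --- weak positivity of $f_*\omega_{X/Z}^{\otimes N}$ in arbitrary characteristic and arbitrary relative dimension --- is unavailable, and an outline whose key lemma is conceded to be open is not a proof. (The statement is indeed a conjecture; the paper itself only establishes the case $\dim X=3$, $\dim Z=1$, $p>5$.) Beyond that central gap, the preliminary reduction to a base of general type is itself not innocuous in characteristic $p$: passing to the Iitaka fibration $h:Z'\to W$ of the base and invoking subadditivity for the composite $X\times_Z Z'\to W$ quietly uses another instance of $C_{n,m}$ (for the restriction of $X$ to a general fiber of $h$), and the general fiber of $h$ may be singular or non-reduced at the geometric generic point, so the hypotheses of the conjecture need not propagate. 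Your final inequality $h^0(X,\omega_X^{\otimes M})\ge c\,M^{\kappa(Z)+\kappa(X_{\ol\eta})}$ also needs the multiplication maps $S^a(f_*\omega_{X/Z}^{\otimes N})\to f_*\omega_{X/Z}^{\otimes aN}$ to be generically surjective onto a subsheaf of full rank growth, which is exactly where semipositivity of Hodge bundles enters in characteristic zero and where no substitute is supplied here.

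It is also worth noting that, even in the case the paper does settle, the argument is not the one you outline. Rather than a general weak positivity theorem, the paper runs the three-dimensional MMP ($p>5$) to reduce to a minimal model and then splits according to $\kappa(X_{\ol\eta})$. For $\kappa(X_{\ol\eta})=0$ it uses the canonical bundle formula $K_X\sim_{\Q}f^*(K_Z+L)$ and proves that $L$ is pseudo-effective, and moreover torsion when $Z$ is elliptic, via Atiyah's classification of vector bundles on elliptic curves, Frobenius pullbacks, and a pigeonhole argument in $\Pic^0(Z)$. For $\kappa(X_{\ol\eta})=1$ it uses the relative Iitaka fibration, a nef subbundle of $f_*\omega_{X/Z}^{\otimes m}$ of rank at least $cm$ (obtained from Tanaka's vanishing on surfaces, not from a global weak positivity statement), and, in the hardest case $g(Z)=1$ with all degrees zero, an adjunction argument on log canonical centres to force the twisting line bundles $L_i\in\Pic^0(Z)$ to be torsion. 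In short, where your strategy stalls on a missing positivity theorem, the paper substitutes MMP techniques together with torsion arguments on $\Pic^0$ of elliptic curves; that substitution is the actual content of the result, and it is absent from your proposal.
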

In characteristic zero, many results related to this conjecture are known \cite{Bir09,Cao15,CP15,CH11,Fuj13,Fuj14,Kaw81,Kaw82,Kaw85,Kol87,KP15,Lai11,Vie77,Vie83}. In particular, this conjecture was reduced to problems in the minimal model program by Kawamata \cite[Corollary 1.2]{Kaw85}. \par
In positive characteristic, Conjecture \ref{conj:Iitaka} has been proved in some cases recently. Chen and Zhang showed $C_{n,n-1}$ \cite[Theorem 1.2]{CZ13}. Under the assumption that $p>5$, $C_{3,1}$ was shown when $k=\ol{\mb F_p}$ by Birkar, Chen and Zhang \cite[Theorem 1.2]{BCZ15}, when $X_{\ol\eta}$ is of general type \cite[Theorem 1.5]{Eji15} (see also \cite[Appendix 7]{Zha16}), and when the genus of $Z$ is at least two by Zhang \cite[Corollary 1.9]{Zha16}. Furthermore, when $f$ has singular geometric generic fiber, its dualizing sheaf, denoted by $\omega_{X_{\ol\eta}}$ (the same notation with canonical sheaf because they coincide with each other when $X_{\ol\eta}$ is smooth), was considered by \cite{Pat13} and \cite{Zha16}, and under some special situations, an analogous inequality $\kappa(X)\ge\kappa(X_{\ol\eta}, \omega_{X_{\ol\eta}})+\kappa(Z)$
was proved.

The aim of this paper is to prove the theorem below.
\begin{thm}\label{thm:c31_intro}\samepage
Conjecture $C_{3,m}$ holds when $\mathrm{char} k = p>5$.
\end{thm}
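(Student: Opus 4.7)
The plan is to distinguish cases by the dimension $m$ of $Z$. The case $m=0$ is trivial, and the case $m=3$ is immediate from the birational invariance of $\kappa$, since a fibration with zero-dimensional integral geometric generic fiber is automatically birational. For $m=2$, the fibers are smooth projective curves and the statement is exactly $C_{n,n-1}$ of Chen--Zhang.

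The essential case is therefore $m=1$. If $\kappa(X_{\ol\eta})=-\infty$ or $\kappa(Z)=-\infty$ the inequality is vacuous, so I may assume $\kappa(X_{\ol\eta})\ge 0$ and $g(Z)\ge 1$. Zhang's theorem handles $g(Z)\ge 2$, and Ejiri's theorem handles $\kappa(X_{\ol\eta})=2$ over arbitrary algebraically closed $k$; the remaining essential case is
$$g(Z)=1,\qquad \kappa(X_{\ol\eta})\in\{0,1\},$$
where $Z$ is an elliptic curve and the geometric generic fiber is a smooth projective surface of Kodaira dimension $0$ or $1$. Here one must prove $\kappa(X)\ge\kappa(X_{\ol\eta})$.

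My plan for this core case is to run a relative $K_X$-MMP over $Z$, using the 3-fold MMP in characteristic $p>5$ of Hacon--Xu and Birkar--Waldron, and thereby replace $X$ by a relative minimal model $f'\colon X'\to Z$ with $K_{X'/Z}$ nef over $Z$. A relative Iitaka fibration $X'\dashrightarrow Y\to Z$ then produces an intermediate variety of dimension $\kappa(X_{\ol\eta})+1$ whose fibers over $Y$ have Kodaira dimension zero, and the problem is reduced to analyzing positivity of the direct image sheaves $f'_*\mathcal{O}_{X'}(NK_{X'/Z})$ on the elliptic curve $Z$ for $N$ sufficiently divisible. Since $K_Z\sim 0$, a nonzero direct image of non-negative degree on $Z$ yields $H^0(X',NK_{X'})\ne 0$ and hence $\kappa(X)\ge 0$; iterating the same positivity principle along the tower $X'\to Y\to Z$ upgrades this to $\kappa(X)\ge\kappa(X_{\ol\eta})$.

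The principal obstacle is this positivity input in characteristic $p$, where the Fujita--Kawamata Hodge-theoretic semipositivity is unavailable. I intend to combine Patakfalvi's subadditivity machinery with Ejiri's positivity theorems for relative pluri-canonical sheaves under iterated Frobenius, together with a case-by-case surface-classification analysis: abelian, K3, Enriques, and (quasi-)hyperelliptic fibers when $\kappa(X_{\ol\eta})=0$, and a properly elliptic relative Iitaka fibration when $\kappa(X_{\ol\eta})=1$. Purely inseparable phenomena — for instance an inseparable moduli map to a moduli space of elliptic curves or of K3 surfaces — are expected to force a Frobenius-twist argument to restore reducedness of the relevant fibers before weak positivity can be invoked, and controlling these pathologies will be the technically most demanding step.
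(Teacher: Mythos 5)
Your reduction to the core case is the same as the paper's: $m=3$ and $m=0$ are trivial, $m=2$ is Chen--Zhang, $\kappa(X_{\ol\eta})=2$ is Ejiri, $g(Z)\ge 2$ is handled by known positivity/weak-positivity arguments, and the essential case is $Z$ an elliptic curve with $\kappa(X_{\ol\eta})\in\{0,1\}$. Your skeleton for that case (relative MMP in $p>5$, relative Iitaka fibration, positivity of $f'_*\O_{X'}(NK_{X'/Z})$ on $Z$) also matches the paper. The gap is in the one concrete mechanism you give for concluding: the assertion that ``a nonzero direct image of non-negative degree on $Z$ yields $H^0(X',NK_{X'})\ne 0$'' is false, and it is false precisely in the situation that remains after all the reductions. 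A nef vector bundle of degree zero on an elliptic curve decomposes, after an isogeny, into line bundles in $\Pic^0(Z)$ (Atiyah; Proposition \ref{prop:decomp}), and none of these summands need be trivial; so what one actually obtains is $h^0(X, NK_X - f^*L)>0$ for some $L\in\Pic^0(Z)$, not $h^0(X,NK_X)>0$. Showing that the relevant $L$ is torsion is the entire content of the paper's new work: for $\kappa(X_{\ol\eta})=0$ this is Theorem \ref{thm:sa_intro}, proved by a pigeonhole argument on $\Pic^0(Z)$ using Atiyah's classification together with generic surjectivity of Frobenius trace maps (relying on $F$-purity of the RDP fiber, which holds uniformly for $p>5$ --- no case division into abelian/K3/Enriques/quasi-hyperelliptic is needed or used); for $\kappa(X_{\ol\eta})=1$ it is Steps 2--4 of Section \ref{section:k(F)=1}, where one runs an auxiliary dlt MMP for $(\tilde X,\tilde B)$, identifies lc centers dominating $Z$ inside the divisors $B_i\sim mK_X-f^*L_i$, and uses adjunction plus abundance for lc surface pairs (Lemma \ref{adj}) to force two of the $L_i$ to be $\Q$-trivial.

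Your closing paragraph correctly anticipates that the characteristic-$p$ positivity input is the obstacle, but ``iterating the same positivity principle along the tower'' and ``a Frobenius-twist argument to restore reducedness'' do not supply the missing torsion statement; without an argument pinning down the degree-zero summands of the pushforward in $\Pic^0(Z)$, the proof does not close in the only case that is actually new.
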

The proof relies on the minimal model program for varieties of dimension at most three in characteristic $p>5$ developed by several mathematicians including Birkar, Cascini, Hacon, Tanaka, Waldron and Xu. The case when $\kappa(X_{\ol\eta}) = 2$ has been proved by the first author \cite[Theorem 1.5]{Eji15}. In this paper we only need to consider the cases $\kappa(X_{\ol\eta}) = 0, 1$.

This paper is organized as follows. Section \ref{section:mmp} includes some basic results to be used in our proof, including minimal model theory of 3-folds, vector bundles on elliptic curves and weak positivity of push-forward of pluri-relative canonical sheaves. Section \ref{section:k(F)=0} and \ref{section:k(F)=1} are devoted to study the cases $\kappa(X_{\ol\eta}) = 0$ and $1$ respectively.

\textbf{Notation and Conventions:} In this paper, we fix an algebraically closed field $k$ of characteristic $p>0$.
A {\it $k$-scheme} is a separated scheme of finite type over $k$. A {\it variety}
means an integral $k$-scheme, and a {\it curve} (resp. {\it surface, $n$-fold}) means
a variety of dimension one (resp. two, $n$).

Let $\varphi:S\to T$ be a morphism of schemes and let $T'$ be a $T$-scheme.
Then we denote by $S_{T'}$ and $\varphi_{T'}:S_{T'}\to T'$ respectively the fiber product
$S\times_{T}T'$ and its second projection. For a prime $p\in\Z$, $\mb F_p$ and $\Z_{(p)}$
denote respectively $\Z/p\Z$ and the localization of $\Z$ at $p\Z$. For a Cartier, $\Z_{(p)}$-Cartier
or $\Q$-Cartier divisor $D$ on $S$ (resp. an $\O_S$-module $\G$), the pullback of $D$ (resp. $\G$) to $S_{T'}$
is denoted by $D_{T'}$ or $D|_{S_{T'}}$ (resp. $\G_{T'}$ or $\G|_{S_{T'}}$) if it is well-defined. Similarly, for a homomorphism of $\O_S$-modules
$\alpha:\F\to\G$ the pullback of $\alpha$ to $S_{T'}$ is denoted by $\a_{T'}:\F_{T'}\to \G_{T'}$.

\begin{small}
\begin{acknowledgement}
The first author is greatly indebted to his supervisor Professor Shunsuke Takagi for suggesting the problems in this paper, for many fruitful discussions and for much helpful advice.
He is deeply grateful to Professors Caucher Birkar and Yifei Chen for valuable comments and suggestions.
He would like to thank Professors Yoshinori Gongyo and Akiyoshi Sannai for helpful comments.
He also would like to thank Doctors Takeru Fukuoka and Hokuto Konno for useful comments.
He is supported by JSPS KAKENHI Grant Number 15J09117 and the Program for Leading Graduate Schools, MEXT, Japan. Part of this work was done during the period of the second author visiting BICMR, he would like to thank Prof. Chenyang Xu for his hospitality and useful discussion. The second author is supported by grant
NSFC (No. 11401358 and No. 11531009).
The authors thank the referee for many useful comments and suggestions to improve this paper.
\end{acknowledgement}
\end{small}

\section{Preliminaries}\label{section:mmp}
In this section, we recall some basic results which will be used in the proof.
\subsection{Minimal models of $3$-folds}
Existence of (log) minimal models of $3$-folds in positive characteristic $p>5$ was first proved for canonical singularities by Hacon and Xu \cite{HX15}, and in general by Birkar \cite{Bir13} (see \cite{Wal16} for the lc case).
The result on Mori fiber spaces was proved for terminal singularities by Cascini, Tanaka and Xu \cite{CTX15},
and in general by Birkar and Waldron \cite{BW14}. We collect some results in the following theorem, which will be used in our proof.
\begin{thm}\label{thm:mmp}\samepage
Assume that the base field $k$ has characteristic $p>5$. Let $f:X\to Z$ be a contraction from a normal $3$-fold, and let $\Delta$ be an effective $\Q$-Cartier $\Q$-divisor on $X$.

(1) If either $(X,\Delta)$ is klt and  $K_X+\Delta$ is pseudo-effective over $Z$, or $(X,\Delta)$ is lc and $K_X+\Delta$ has a weak Zariski decomposition \footnote{i.e., there exists a birational projective morphism $\mu: W \to X$ such that $\nu^*(K_X + \Delta) = P + M$ where $P$ is nef over $Z$ and $M$ is effective},
then $(X,\Delta)$ has a log minimal model over $Z$.

(2) If $(X, \Delta)$ is a dlt pair and $Z$ is a smooth projective curve with $g(Z) \geq 1$, then every step of LMMP in \cite[Sec. 3.5-3.7]{Bir13} starting from $(X, \Delta)$ is over $Z$.
\end{thm}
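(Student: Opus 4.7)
The plan is to assemble Theorem \ref{thm:mmp} from existing results on the minimal model program for $3$-folds in characteristic $p>5$; nothing original is required, only identification of the correct references for (1) and a short relative-dimension argument for (2).

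For part (1), in the klt case with $K_X+\Delta$ pseudo-effective over $Z$, I would cite Birkar's existence theorem for log minimal models of klt $3$-fold pairs in characteristic $p>5$ \cite{Bir13}, which in turn builds on Hacon--Xu \cite{HX15} for the canonical case. For the lc case under the weak Zariski decomposition hypothesis $\mu^*(K_X+\Delta)=P+M$ with $P$ nef over $Z$ and $M$ effective, I would invoke Waldron's extension to lc pairs \cite{Wal16}: the weak Zariski decomposition provides precisely the positivity needed to run and terminate a suitable $(K_X+\Delta)$-MMP over $Z$, producing the desired log minimal model.

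For part (2), I would argue that every extremal ray contracted by the LMMP of \cite[Sec.~3.5--3.7]{Bir13} is vertical over $Z$, so each MMP step automatically factors through $f$. The key input is the cone theorem for dlt $3$-fold pairs in characteristic $p>5$, which ensures that every $(K_X+\Delta)$-negative extremal ray $R$ is generated by a rational curve $C$. If $f(C)$ were not a point, then $f|_C\colon C\to Z$ would be surjective, and composing with the normalization would yield a dominant map from $\mb P^1$ onto $Z$; but $g(Z)\ge 1$, a contradiction. Hence $C$, and so the entire exceptional locus (resp.\ flipping locus), lies in fibers of $f$, and the divisorial contractions, flipping contractions and resulting flips are all morphisms over $Z$. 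Iterating shows every step of the LMMP stays over $Z$.

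The main obstacle, if any, is guaranteeing rational generators for extremal rays in the positive-characteristic dlt setting---this is the classical difficulty for MMP in char $p$---but in dimension three with $p>5$ it is already available in the cited references, so part (2) reduces to the one-line genus argument above.
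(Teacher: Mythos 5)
Your proposal is correct and follows essentially the same route as the paper: part (1) is quoted from the existing 3-fold MMP literature (the paper uses \cite[Theorem 1.2 and Proposition 8.3]{Bir13} for both the klt and the lc/weak-Zariski cases, with \cite{Wal16} noted as an alternative for lc), and part (2) is exactly the paper's argument via the cone theorem of \cite[Theorem 1.1]{BW14} plus the observation that a rational curve cannot dominate a curve of genus $\ge 1$, iterated over the steps of the LMMP.
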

\begin{proof}
For (1) please refer to \cite[Theorem 1.2 and Proposition 8.3]{Bir13}.

For (2), since $(X, \Delta)$ is dlt, every $K_X + \Delta$-extremal ray is a generated by a rational curve by cone theorem \cite[Theorem 1.1]{BW14}, which is contracted by $f$ since $g(Z) \geq 1$. So for an extremal contraction $X \to \bar{X}$, if there is a divisorial contraction or a flip $\sigma: X \dashrightarrow X^+$ as in \cite[Sec. 3.5-3.7]{Bir13},
there exist natural morphism $\bar{f}: \bar{X} \to Z$ and $f^+: X^+ \to Z$ fitting into the following commutative diagram
$$\xymatrix{&X \ar[rd]\ar[rdd]_f\ar@{.>}[rr] & &X^+ \ar[ld]\ar[ldd]^{f^+}\\
&&\bar{X} \ar[d]^{\bar{f}} &\\
 &&Z &. }$$
Note that $(X^+, \Delta^+ = \sigma_*\Delta)$ is a dlt pair. We can show this assertion by induction.
\end{proof}

\subsection{Covering Theorem}
The result below is [\cite{Iit82}, Theorem 10.5] when $X$ and $Y$ are both smooth, and the proof there also applies when varieties are normal.
\begin{thm}\textup{(\cite[Theorem 10.5]{Iit82})}\label{ct}
Let $f\colon X \rightarrow Y$ be a proper surjective morphism between complete normal varieties.
If $D$ is a Cartier divisor on $Y$ and $E$ an effective $f$-exceptional divisor on $X$, then
$$\kappa(X, f^*D + E) = \kappa(Y, D).$$
\end{thm}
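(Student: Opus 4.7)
My plan is to verify the two inequalities separately, following the outline of the smooth case in \cite[Theorem 10.5]{Iit82} and checking that every step survives in the normal setting. For the direction $\kappa(X, f^*D+E) \geq \kappa(Y, D)$, the canonical section $s_E \in H^0(X, \O_X(mE))$ associated to $mE \geq 0$, composed with pullback along the surjection $f$, produces an injection
$$H^0(Y, mD) \hookrightarrow H^0(X, m(f^*D+E)), \qquad t \mapsto (f^*t) \cdot s_E^m,$$
and comparing the asymptotic growth of the two dimensions gives the inequality (injectivity follows since $f$ is surjective between integral schemes and $s_E$ is not a zero-divisor).

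For the reverse inequality, I would pass through the Stein factorization $X \xrightarrow{g} Y' \xrightarrow{\pi} Y$, in which $g$ is a fibration (with $g_*\O_X = \O_{Y'}$) and $\pi$ is finite surjective; since $X$ is normal and $f$ is proper, $Y'$ is automatically normal (its local rings are integrally closed in $K(X)$). Because $E$ is $f$-exceptional and $\pi$ is finite, $E$ is also $g$-exceptional, so $g(E) \subseteq Y'$ has codimension $\geq 2$. The central step is to show that
$$g_*\O_X(mE) = \O_{Y'} \quad \text{for every } m \geq 0.$$
Granting this, the projection formula yields $g_*\O_X(m(f^*D+E)) = \O_{Y'}(m\pi^*D)$, hence $H^0(X, m(f^*D+E)) = H^0(Y', m\pi^*D)$, and therefore
$$\kappa(X, f^*D+E) = \kappa(Y', \pi^*D) = \kappa(Y, D),$$
the last equality being the standard invariance of Iitaka dimension under finite surjective pullback.

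The main obstacle is the central identity $g_*\O_X(mE) = \O_{Y'}$; here normality of $Y'$, rather than smoothness, is the critical hypothesis. From $mE \geq 0$ we obtain a canonical inclusion $\O_{Y'} = g_*\O_X \hookrightarrow g_*\O_X(mE)$, and this is an isomorphism over the open set $U := Y' \setminus g(E)$ since $E \cap g^{-1}(U) = \emptyset$. The sheaf $g_*\O_X(mE)$ is torsion-free coherent of generic rank one, so its reflexive hull is a rank-one reflexive sheaf on the normal variety $Y'$, hence equals $j_*$ of its own restriction to any open subset whose complement has codimension $\geq 2$, where $j$ denotes the corresponding inclusion. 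Since $Y' \setminus U = g(E)$ has codimension $\geq 2$ by exceptionality of $E$ and $(g_*\O_X(mE))|_U = \O_U$, this forces $g_*\O_X(mE)^{**} = j_*\O_U = \O_{Y'}$, and the sandwich $\O_{Y'} \hookrightarrow g_*\O_X(mE) \hookrightarrow g_*\O_X(mE)^{**} = \O_{Y'}$ yields the desired equality.
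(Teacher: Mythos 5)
Your proof is correct, and it follows essentially the route the paper itself defers to: the paper gives no argument of its own beyond citing \cite[Theorem 10.5]{Iit82} and remarking that the proof there goes through for normal varieties, and your write-up is precisely that verification — Stein factorization, the identity $g_*\O_X(mE)=\O_{Y'}$ via codimension $\geq 2$ of $g(E)$ plus normality of $Y'$, the projection formula, and invariance of $\kappa$ under finite surjective pullback. The reflexive-hull sandwich is a clean way to package the extension step where normality (rather than smoothness) is exactly what is needed.
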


As a corollary we get the following useful result.
\begin{lem}\label{injofpic}
Let $g: W \rightarrow Y$ be surjective projective morphism between projective varieties. Assume $Y$ is normal and let $L_1,L_2 \in \Pic^0(Y)$ be two line bundles on $Y$. If $g^*L_1 \sim_{\mathbb{Q}} g^*L_2$ then $L_1 \sim_{\mathbb{Q}} L_2$.
\end{lem}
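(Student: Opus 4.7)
The plan is to deduce the lemma as essentially a one-line consequence of the covering theorem (Theorem~\ref{ct}). Set $M:=L_1\otimes L_2^{-1}\in\Pic^0(Y)$; then the desired conclusion $L_1\sim_\Q L_2$ is equivalent to $M$ being torsion in $\Pic(Y)$.

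By hypothesis $g^*M\sim_\Q 0$, so there is a positive integer $m$ with $g^*M^{\otimes m}\cong\O_W$, and in particular the Iitaka dimension satisfies $\kappa(W,g^*M)=0$. Since Theorem~\ref{ct} requires the source to be a complete normal variety, I would first pass to the normalization $\nu\colon\bar W\to W$; then $\bar g:=g\circ\nu\colon\bar W\to Y$ is a proper surjective morphism between complete normal varieties with $\bar g^*M^{\otimes m}\cong\O_{\bar W}$, and Theorem~\ref{ct} (applied with $E=0$ and with $D$ a Cartier divisor representing $M$) yields
$$\kappa(Y,M)\;=\;\kappa(\bar W,\bar g^*M)\;=\;0.$$

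It then remains to upgrade $\kappa(Y,M)\ge 0$ to the statement that $M$ is torsion. Choose $N\ge 1$ and a nonzero section $s\in H^0(Y,M^{\otimes N})$; then $M^{\otimes N}\cong\O_Y(\mathrm{div}(s))$ with $\mathrm{div}(s)$ effective. Since $M^{\otimes N}\in\Pic^0(Y)$ is numerically trivial, the effective divisor $\mathrm{div}(s)$ must vanish, giving $M^{\otimes N}\cong\O_Y$ and hence $L_1\sim_\Q L_2$.

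The argument has essentially no hard step. The only mildly delicate points are (i) the normalization of $W$, which is needed so that Theorem~\ref{ct} applies (the hypotheses of the lemma do not assume $W$ normal), and (ii) invoking the $\Pic^0$ hypothesis at the very end to promote ``some power of $M$ has a nonzero section'' to ``some power of $M$ is trivial.'' Neither is a real obstacle, so the bulk of the work is simply packaging the covering theorem correctly.
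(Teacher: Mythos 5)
Your proof is correct and follows essentially the same route as the paper: pass to the normalization of $W$, apply the covering theorem (Theorem~\ref{ct}) to the difference bundle $M=L_1\otimes L_2^{-1}$ to get $\kappa(Y,M)\ge 0$, and use numerical triviality to conclude $M$ is torsion. You actually spell out the final step (effective numerically trivial divisor must be zero) more explicitly than the paper does.
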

\begin{proof}
Let $L = L_1 \otimes L_2^{-1}$. Denote by $\sigma: W' \to W$ the normalization and  $g'=g\circ \sigma: W' \to Y$. Then $g'^*L \sim_{\Q} 0$. Applying Theorem \ref{ct} to $g': W' \to Y$ gives that $L \sim_{\Q} 0$, which is equivalent to that $L_1 \sim_{\mathbb{Q}} L_2$.
\end{proof}

\subsection{Adjunction}
\begin{lem}\label{adj}
Assume that the base field $k$ has characteristic $p>5$. Let $(X, \Delta)$ be a normal, $\mathbb{Q}$-factorial, lc 3-fold (not necessarily projective). Let $C$ be a projective lc center of $(X, \Delta)$ and $\tilde{C}$ be the normalization of $C$. If $(K_X + \Delta)|_{\tilde{C}}$ is numerically trivial, then $(K_X + \Delta)|_{\tilde{C}}$ is $\mathbb{Q}$-trivial.
\end{lem}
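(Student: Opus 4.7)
The plan is to split by $\dim C \in \{0,1,2\}$. The case $\dim C = 0$ is trivial because any $\Q$-Cartier divisor restricted to a point vanishes, so I concentrate on the two-dimensional and one-dimensional cases. The main ingredients I will invoke are adjunction for lc centers, the existence of $\Q$-factorial dlt modifications for three-folds in characteristic $p>5$ (Birkar), Tanaka's abundance theorem for numerically trivial lc pairs on normal projective surfaces in characteristic $p>5$, and Lemma \ref{injofpic}.

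When $\dim C = 2$, the center $C$ is a component of $\lfloor \Delta \rfloor$, and adjunction along the normalization $\tilde C$ for the $\Q$-factorial lc pair $(X,\Delta)$ yields an lc pair $(\tilde C, \Delta_{\tilde C})$ satisfying $(K_X+\Delta)|_{\tilde C} = K_{\tilde C} + \Delta_{\tilde C}$. By hypothesis the left side is numerically trivial, and applying surface abundance to the numerically trivial lc pair $(\tilde C, \Delta_{\tilde C})$ upgrades this to $K_{\tilde C} + \Delta_{\tilde C} \sim_\Q 0$, as required.

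When $\dim C = 1$, the plan is to lift the question to a surface via a dlt modification and then descend. Let $\pi \colon (Y, \Delta_Y) \to (X, \Delta)$ be a $\Q$-factorial dlt modification, so that $K_Y + \Delta_Y = \pi^*(K_X+\Delta)$ and every lc place of $(X,\Delta)$ can be extracted as a component of $\lfloor \Delta_Y \rfloor$. Since $C$ is an lc center I may arrange a prime divisor $S \subset \lfloor \Delta_Y \rfloor$ with $\pi(S) = C$; the dlt hypothesis forces $S$ to be normal, so $\pi|_S$ factors through the normalization as a surjective projective morphism $g\colon S \to \tilde C$. Dlt adjunction provides $K_S + \Delta_S = (K_Y + \Delta_Y)|_S$ with $(S, \Delta_S)$ lc, and by construction
$$K_S + \Delta_S \;=\; g^*\bigl[(K_X+\Delta)|_{\tilde C}\bigr].$$
The right-hand side is numerically trivial by hypothesis, so surface abundance again yields $K_S + \Delta_S \sim_\Q 0$, i.e.\ $g^*\bigl[(K_X+\Delta)|_{\tilde C}\bigr] \sim_\Q 0$. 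Since $\tilde C$ is a smooth projective curve, a sufficiently divisible integral multiple of $(K_X+\Delta)|_{\tilde C}$ is a degree zero line bundle, hence lies in $\Pic^0(\tilde C)$; Lemma \ref{injofpic} applied to $g$ then descends the vanishing to $(K_X+\Delta)|_{\tilde C} \sim_\Q 0$.

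I expect the main difficulty to lie in the one-dimensional case: one must arrange a dlt modification that genuinely extracts a divisorial lc place above the curve $C$ in characteristic $p>5$, and one must invoke abundance for numerically trivial lc pairs on the normal projective surface $S$ in this characteristic. Once those two positive-characteristic MMP inputs are granted, the rest of the argument is a formal combination of adjunction with the descent provided by Lemma \ref{injofpic}.
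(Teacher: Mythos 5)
Your proposal is correct and follows essentially the same route as the paper: extract a divisorial lc place over $C$ via a crepant dlt modification (the paper cites \cite[Lemma 6.5]{Bir13}), apply dlt adjunction to get an lc surface pair, invoke Tanaka's abundance for lc surfaces in characteristic $p>5$, and descend to $\tilde C$ via Lemma \ref{injofpic}. The only cosmetic difference is that the paper treats all dimensions of $C$ uniformly with this one argument rather than splitting into cases.
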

\begin{proof}
By \cite[Lemma 6.5]{Bir13}, we can take a crepant partial resolution $\mu:X' \rightarrow X$ such that
$$K_{X'} + D + \Delta' \sim_{\mathbb{Q}} \mu^*(K_X + \Delta)\cdots (\clubsuit)$$
where $D$ is a reduced irreducible divisor dominant over $C$ and $(X', D + \Delta')$ is dlt. Then considering the restriction of the relation $\clubsuit$ on $D$, by the adjunction formula \cite[5.3]{Ke99}, we have
$$K_{D} + \Delta_D \sim_{\mathbb{Q}} \mu^*(K_X + \Delta)|_D.$$
Then $D$ is a normal projective surface hence granted a natural morphism $D \to \tilde{C}$, and $(D, \Delta_D)$ is log canonical by \cite[Lemma 5.2]{Bir13}. Applying \cite[Theorem 1.2]{Tan14}, we have that $K_{D} + \Delta_D$ is semi-ample, thus $\mu^*(K_X + \Delta)|_{D}$ is $\mathbb{Q}$-trivial since $(K_X + \Delta)|_{\tilde{C}}$ is numerically trivial. We can conclude that $(K_X + \Delta)|_{\tilde{C}}$ is $\mathbb{Q}$-trivial by Lemma \ref{injofpic}.
\end{proof}
\subsection{Vector bundles on elliptic curves}
In this subsection, we recall some facts about vector bundles on elliptic curves,
which are used in the proof of Theorem \ref{thm:sa_intro}.
\begin{thm}\label{thm:facts on vb on ell curve}\samepage Let $C$ be an elliptic curve, and let $\E_C(r,d)$ be the set of isomorphism classes of indecomposable vector bundles of rank $r$ and of degree $d$.
\begin{itemize}
\item[(1)]\textup{(\cite[Theorem 5]{Ati57})}For each $r>0$, there exists a unique element $\E_{r,0}$ of
$\E_C(r,0)$ with $H^0(C,\E_{r,0})\ne0$. Moreover, for every $\E\in\E_C(r,0)$ there exists an $\L\in\Pic^0(C)$ such that $\E\cong\E_{r,0}\otimes\L$.
\item[(2)]\textup{(\cite[Proposition 2.10]{Oda71})} When the Hasse invariant ${\rm Hasse}(C)$
is nonzero, $F_C^*\E_{r,0}\cong \E_{r,0}$. When ${\rm Hasse}(C)$ is zero,
$F_C^*\E_{r,0}\cong \bigoplus_{1\le i\le\min\{r,p\}}\E_{\lfloor(r-i)/p\rfloor+1,0}$,
where $\lfloor r\rfloor$ denotes the round down of $r$.
\end{itemize}
\end{thm}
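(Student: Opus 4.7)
The plan is to invoke the cited results of Atiyah \cite{Ati57} and Oda \cite{Oda71} directly. Sketching the intended arguments:

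For part (1), I would argue by induction on $r$. The base case $r = 1$ reduces to the fact that a degree-zero line bundle on $C$ has a nonzero global section if and only if it is trivial, so $\E_{1,0} = \O_C$. For the inductive step, construct $\E_{r,0}$ as the unique nonsplit extension
$$0 \to \O_C \to \E_{r,0} \to \E_{r-1,0} \to 0,$$
using Serre duality together with the inductive hypothesis to see that $\operatorname{Ext}^1(\E_{r-1,0}, \O_C) \cong H^1(C, \E_{r-1,0}^{\vee})$ is one-dimensional. Indecomposability follows because $\O_C$ is the unique trivial subsheaf with indecomposable quotient, and conversely any indecomposable rank-$r$ degree-zero bundle with a nonzero section contains $\O_C$ as a subsheaf inducing this same extension, giving uniqueness up to isomorphism. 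For the second statement, given $\E \in \E_C(r,0)$, a Riemann--Roch/dimension count shows that $H^0(C, \E \otimes \L^{-1}) \neq 0$ for some $\L \in \Pic^0(C)$, and then the first part yields $\E \otimes \L^{-1} \cong \E_{r,0}$.

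For part (2), the key observation is that the absolute Frobenius acts on $H^1(C, \O_C) \cong k$ either as an isomorphism or as zero, according to whether the Hasse invariant is nonzero or zero. In the ordinary case, the Frobenius pullback of the defining extension $0 \to \O_C \to \E_{r,0} \to \E_{r-1,0} \to 0$ remains nonsplit by induction, so $F_C^* \E_{r,0}$ is again an indecomposable rank-$r$ degree-zero bundle with a nonzero global section, and part (1) forces $F_C^* \E_{r,0} \cong \E_{r,0}$. In the supersingular case, Frobenius annihilates the relevant extension classes, so the pulled-back extensions split; tracing the $r$-step filtration $0 \subset \O_C \subset \E_{2,0} \subset \cdots \subset \E_{r,0}$ and keeping careful track of which Frobenius pullbacks coincide assembles the splittings into the claimed direct sum $\bigoplus_{1 \le i \le \min\{r,p\}} \E_{\lfloor (r-i)/p \rfloor + 1, 0}$.

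The main obstacle would be the bookkeeping in the supersingular case of (2): the splittings obtained at each stage of the filtration must be assembled coherently into a direct sum of indecomposables with the exact ranks stated, which requires a careful analysis of how Frobenius pullback interacts with the inductive construction of $\E_{r,0}$, especially the recursive identification of summands when $r$ exceeds $p$. The ordinary case and all of part (1) are comparatively direct consequences of the uniqueness built into the extension construction.
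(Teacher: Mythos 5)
The paper does not actually prove this theorem: it is quoted verbatim from Atiyah and Oda and is used downstream only as a black box (in Proposition \ref{prop:decomp} and hence in Theorem \ref{thm:sa_intro}), so there is no in-paper argument to compare yours against; what you have written is an outline of the original proofs. For part (1) the outline is structurally right --- the iterated nonsplit extensions $0\to\O_C\to\E_{r,0}\to\E_{r-1,0}\to0$ with $\dim\mathrm{Ext}^1(\E_{r-1,0},\O_C)=h^1(C,\E_{r-1,0}^{\vee})=1$ are exactly Atiyah's construction --- but two steps are glossed over. First, a nonsplit extension of an indecomposable bundle by $\O_C$ need not be indecomposable in general; Atiyah deduces indecomposability of $\E_{r,0}$ from the finer statement $h^0(C,\E_{r,0})=1$ and an analysis of where a hypothetical direct summand would have to sit. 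Second, the existence of $\L\in\Pic^0(C)$ with $H^0(C,\E\otimes\L^{-1})\ne0$ is \emph{not} a Riemann--Roch count: on an elliptic curve $\chi(\E\otimes\L^{-1})=\deg(\E\otimes\L^{-1})=0$, so Riemann--Roch only gives $h^0=h^1$. One needs Atiyah's separate lemma that an indecomposable degree-zero bundle on an elliptic curve admits a line subbundle of degree zero, which is itself proved by induction.

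For part (2), the ordinary case works as you say, modulo the same caveat that nonsplitness of the pulled-back extension does not by itself give indecomposability (once indecomposability is known, part (1) plus triviality of the determinant and the existence of a section do pin down $F_C^*\E_{r,0}\cong\E_{r,0}$). The supersingular case, however, is the entire content of Oda's Proposition 2.10, and ``tracing the filtration and keeping careful track'' is precisely the part you have not supplied: one must compute the ($p$-linear) Frobenius action not just on $H^1(C,\O_C)$ but on $H^1(C,\E_{a,0})$ and on the relevant $\mathrm{Hom}$ and $\mathrm{Ext}$ groups between Atiyah bundles in order to see that the pulled-back filtration reassembles into indecomposable blocks of ranks exactly $\lfloor(r-i)/p\rfloor+1$. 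As a citation of classical results --- which is all the paper itself does --- your proposal is fine; as a proof, the supersingular computation is a genuine gap.
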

\begin{thm}[\textup{\cite[1.4. Satz]{LS77}}]\label{thm:fact on vb on sm curve}\samepage
Let $\E$ be a vector bundle on a smooth projective curve $C$.
If ${F_C^e}^*\E\cong\E$ for some $e>0$, then there exists an \'etale morphism $\pi:C'\to C$ from a smooth
projective curve $C'$ such that $\pi^*\E\cong\bigoplus\O_{C'}$.
\end{thm}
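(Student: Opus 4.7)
The plan is to recognize the datum $({F_C^e}^*\E\cong\E)$ as an $\mathbb{F}_{p^e}$-étale local system on $C$, then use the finiteness of $GL_r(\mathbb{F}_{p^e})$ together with the Galois correspondence for $\pi_1^{\mathrm{et}}(C)$ to produce the étale cover that trivializes $\E$.

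First I would reinterpret the given isomorphism $\sigma:{F_C^e}^*\E\xrightarrow{\sim}\E$ as an additive, $p^e$-semilinear endomorphism $\varphi:\E\to\E$ defined on sections by $s\mapsto\sigma(1\otimes s)$, satisfying $\varphi(fs)=f^{p^e}\varphi(s)$. Then I would form, on the small étale site of $C$, the subsheaf of $\varphi$-fixed sections $\E^{\varphi}:=\{s\in\E\mid\varphi(s)=s\}$, a sheaf of $\mathbb{F}_{p^e}$-vector spaces. The key local computation, essentially Lang's theorem, is to show that over any strict Henselization $\O_{C,\bar{x}}^{\mathrm{sh}}$ the equation $\varphi(s)=s$ has exactly $p^{er}$ solutions: choose a local trivialization $\E\cong\O^{\oplus r}$, write $\varphi(s)=A\cdot s^{(p^e)}$ for some $A\in GL_r(\O)$, and use invertibility of $A$ plus the surjectivity of $t\mapsto t^{p^e}-t$ on a strictly Henselian ring to produce all the solutions after a suitable étale base change. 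This shows that $\E^{\varphi}$ is étale-locally the constant sheaf $\mathbb{F}_{p^e}^{\oplus r}$, and that the natural evaluation map $\E^{\varphi}\otimes_{\mathbb{F}_{p^e}}\O_C\to\E$ is an isomorphism on the étale site.

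Second, by the standard equivalence between lisse $\mathbb{F}_{p^e}$-sheaves on $C$ and continuous representations of the étale fundamental group, $\E^{\varphi}$ corresponds to a representation $\rho:\pi_1^{\mathrm{et}}(C,\bar{\eta})\to GL_r(\mathbb{F}_{p^e})$. Because the target group is finite, $\ker\rho$ is an open subgroup of finite index; by the Galois correspondence it determines a connected finite étale cover $\pi:C'\to C$ with $\pi_1^{\mathrm{et}}(C',\bar{\eta})\subseteq\ker\rho$. On $C'$ the pullback $\pi^*\E^{\varphi}$ is then the constant sheaf $\mathbb{F}_{p^e}^{\oplus r}$, so that
\[
\pi^*\E\ \cong\ \pi^*\E^{\varphi}\otimes_{\mathbb{F}_{p^e}}\O_{C'}\ \cong\ \O_{C'}^{\oplus r},
\]
which is what the theorem asserts. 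Note that since $C$ is a smooth projective curve and $\pi$ is étale, $C'$ is again a smooth projective curve.

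The main obstacle is the étale-local trivialization step: one must genuinely verify that $\E^{\varphi}$ is a rank-$r$ local system rather than merely some sheaf of $\mathbb{F}_{p^e}$-modules, which requires the Artin–Schreier/Lang calculation above. Once that is established, the passage from Frobenius-periodicity to a representation of $\pi_1^{\mathrm{et}}(C)$ and the finiteness of $GL_r(\mathbb{F}_{p^e})$ yield the desired étale cover purely formally.
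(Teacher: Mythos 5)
The paper gives no proof of this statement, simply citing Lange--Stuhler [LS77, 1.4 Satz], and your argument is precisely the standard proof from that reference: the Frobenius-periodicity datum is converted, via the Artin--Schreier/Lang local computation, into a lisse $\mathbb{F}_{p^e}$-sheaf of rank $r$, hence a representation $\pi_1^{\mathrm{et}}(C)\to GL_r(\mathbb{F}_{p^e})$ with finite image, whose kernel cuts out the trivializing finite \'etale cover. Your proposal is correct and complete in outline, with the one genuinely nontrivial step (that $\E^{\varphi}$ is \'etale-locally constant of the full rank $r$, not just some $\mathbb{F}_{p^e}$-sheaf) correctly identified and correctly resolved by the strictly Henselian solvability of $A\cdot s^{(p^e)}=s$.
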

\begin{prop}\label{prop:decomp}\samepage
Let $\E$ be a vector bundle on an elliptic curve $C$. Then there exists a finite morphism $\pi:C'\to C$ from an elliptic curve $C'$ such that $\pi^*\E$ is a direct sum of line bundles.
\end{prop}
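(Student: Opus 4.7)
The plan is to reduce, via Krull--Schmidt and a degree-normalization trick, to the case of degree-zero indecomposable bundles, for which Theorems~\ref{thm:facts on vb on ell curve} and~\ref{thm:fact on vb on sm curve} yield the required trivialization directly.

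First, I would decompose $\E = \bigoplus_i \E_i$ into indecomposable summands by Krull--Schmidt. Since a fibre product of finite morphisms between elliptic curves is again a finite morphism from an elliptic curve, it suffices to produce the required isogeny for each $\E_i$ separately and then take a common refinement; hence I may assume $\E$ is indecomposable of rank $r$ and degree $d$, and set $h = \gcd(r,d)$, $n = r/h$. Next, pull back along an isogeny $\phi : C_1 \to C$ of degree $n$ between elliptic curves --- such a $\phi$ always exists (one takes the quotient of $C$ by an order-$n$ subgroup scheme, incorporating a Frobenius factor when $p\mid n$ and $C$ is supersingular). The pullback $\phi^*\E$ has rank $r$ and slope $d/h\in\Z$. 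Since an indecomposable bundle on an elliptic curve is semistable, and semistability is preserved under isogeny-pullback between elliptic curves, $\phi^*\E$ is again semistable; and on an elliptic curve a semistable bundle splits as a direct sum of indecomposable summands of the same slope. Fixing a line bundle $L$ on $C_1$ of degree $-d/h$, one then obtains $\phi^*\E \otimes L \cong \bigoplus_j N_j$ with each $N_j$ indecomposable of degree zero.

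It remains to handle the degree-zero indecomposables. Writing $r_j = \rank N_j$, Theorem~\ref{thm:facts on vb on ell curve}(1) applied on $C_1$ yields $N_j \cong \E_{r_j,0}\otimes \L_j$ for some $\L_j \in \Pic^0(C_1)$. To trivialize each $\E_{r_j,0}$ I would construct a further isogeny $\sigma : C' \to C_1$ from an elliptic curve: when the Hasse invariant of $C_1$ is nonzero, Theorem~\ref{thm:facts on vb on ell curve}(2) gives $F_{C_1}^*\E_{r_j,0} \cong \E_{r_j,0}$, so Theorem~\ref{thm:fact on vb on sm curve} furnishes an \'etale cover --- necessarily an elliptic curve --- along which $\E_{r_j,0}$ becomes trivial; when the Hasse invariant vanishes, iterating the Frobenius decomposition in Theorem~\ref{thm:facts on vb on ell curve}(2) strictly decreases the maximum rank of indecomposable summands of $F_{C_1}^{e*}\E_{r_j,0}$ at each step, until only copies of $\O = \E_{1,0}$ remain. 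Taking a common refinement $\sigma$ that handles all the $r_j$ simultaneously and setting $\pi = \phi \circ \sigma$, one then has $\pi^*\E \otimes \sigma^*L \cong \bigoplus_j (\sigma^*\L_j)^{\oplus r_j}$, a direct sum of line bundles on $C'$; consequently $\pi^*\E$ itself is a direct sum of line bundles.

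The main obstacle will be the first (degree-reduction) step: exhibiting a degree-$n$ isogeny $\phi$ between elliptic curves when $p\mid n$ and $C$ is supersingular (where $\phi$ is forced to be inseparable and must involve a Frobenius factor), and verifying that pullback along such an isogeny preserves both semistability and the decomposition of semistable bundles into indecomposables of the same slope. Both assertions are classical facts about vector bundles on elliptic curves but require more care in positive characteristic than the clean degree-zero case directly addressed by the cited theorems.
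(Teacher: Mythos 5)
Your proof is correct in outline, but it takes a genuinely different route from the paper's in the key reduction step. You reduce to the degree-zero case by invoking the semistability theory of bundles on elliptic curves: Krull--Schmidt, the fact that indecomposable bundles on an elliptic curve are semistable, preservation of semistability under isogeny pullback, and the splitting of a semistable bundle into indecomposables of equal slope. The paper instead makes a minimality reduction: it assumes (in effect, by induction on the rank) that $\varphi^*\E$ stays \emph{indecomposable} for every finite morphism $\varphi:B\to C$ from an elliptic curve, pulls back by the multiplication map $r_C$ (so the degree becomes divisible by $r$) and twists by a line bundle to reach degree zero, and then applies Atiyah, Oda and Lange--Stuhler to force $r=1$. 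The paper's device buys exactly the avoidance of the two delicate points you flag at the end: it never needs semistability at all, and in particular never needs the genus-one--specific fact that Frobenius pullback preserves semistability (false on curves of higher genus, and true here only because $\mu_{\max}-\mu_{\min}$ of a Frobenius pullback is bounded by $(2g-2)(\rank-1)=0$, or alternatively because Oda's theorem exhibits $F_C^*\E_{r,0}$ as a sum of degree-zero indecomposables). Your flagged obstacles are genuine but classical and fillable, so your argument goes through; it just carries more machinery than the paper's. Two small further cautions: the fibre product of two isogenies need not itself be an elliptic curve (it can be disconnected or non-reduced), so for the ``common refinement'' you should instead use that every isogeny is dominated by a multiplication map $[n]_C$; and in the supersingular case the trivializing morphism is an iterated Frobenius rather than an \'etale cover, which is fine since the statement only asks for a finite morphism.
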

\begin{proof}
We may assume that for every finite morphism $\varphi:B\to C$ from an elliptic curve $B$, $\varphi^*\E$ is indecomposable. Set $d:=\deg\E$ and $r:=\rank\E$. We show that $r=1$. Let $Q\in C$ be a closed point. Replacing $\E$ by $((r_C)^*\E)(-dQ)$, we may assume that $d=0$. Here $r_C:C\to C$ is the morphism given by multiplication by $r$. Hence Theorems \ref{thm:facts on vb on ell curve} and \ref{thm:fact on vb on sm curve} imply that when the Hasse invariant of $C$ is nonzero (resp. zero), there exists an \'etale morphism $\pi:C'\to C$ (resp. an $e>0$) such that $\pi^*\E$ (resp. ${F_C^e}^*\E$) is a direct sum of line bundles. This implies that  $r=1$.
\end{proof}
\subsection{Weak positivity}
The following positivity result will be used in the proof of the case when the geometric generic fiber has Kodaira dimension one.
\begin{thm}\label{mthp}
Assume that $\mathrm{char} k=p>5$.
Let $f: X \rightarrow Z$ be a fibration from a smooth projective 3-fold to a smooth projective curve.
Suppose that the geometric generic fiber $X_{\ol\eta}$ has at most rational double points as singularities.
If $\kappa(X_{\ol\eta},K_{X_{\ol\eta}})=1$, then there exists a real number $c>0$ such that
$f_*\omega_{X/Z}^m$ contains a nef subbundle of rank at least $cm$ for sufficiently divisible $m>0$.
\end{thm}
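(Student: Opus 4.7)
Since $\mathrm{char}\,k=p>5$, surfaces of Kodaira dimension one in this characteristic admit an honest (not quasi-) elliptic fibration onto a smooth curve; applied to the minimal resolution of the RDP surface $X_{\ol\eta}$, this yields its Iitaka fibration. After replacing $X$ by a suitable birational model---which does not alter $f_*\omega_{X/Z}^m$---I may therefore assume a factorization
\[
f:\ X \xrightarrow{\ g\ } Y \xrightarrow{\ h\ } Z,
\]
where $Y$ is a smooth projective surface, $h:Y\to Z$ is a fibration, and the geometric generic fiber of $g$ is a smooth elliptic curve realising the Iitaka fibration of $X_{\ol\eta}$.

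I then apply the canonical bundle formula for elliptic fibrations. For $N$ sufficiently divisible there exist a nef $\Q$-divisor $B$ on $Y$ (combining the moduli part $(R^1g_*\O_X)^\vee$ with a boundary supported on the discriminant locus of $g$) and an effective $g$-exceptional divisor $E$ on $X$ with $NK_{X/Y}\sim g^*(NB)+E$. This produces an inclusion $\O_Y(NB)\hookrightarrow g_*\omega_{X/Y}^{\otimes N}$; combining it with $\omega_{X/Z}=\omega_{X/Y}\otimes g^*\omega_{Y/Z}$ and the projection formula yields
\[
\F_N\;:=\;h_*\bigl(\omega_{Y/Z}^{\otimes N}\otimes\O_Y(NB)\bigr)\;\hookrightarrow\;f_*\omega_{X/Z}^{\otimes N}.
\]

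For the rank of $\F_N$, the restriction $(\omega_{Y/Z}\otimes\O_Y(B))|_{Y_\eta}$ to the generic fiber $Y_\eta$ of $h$ has strictly positive degree, because $\kappa(X_{\ol\eta})=1$ forces the Iitaka polarisation on $Y_{\ol\eta}$ to be big on this curve. Riemann--Roch on $Y_\eta$ together with generic flatness of $h_*$ then give $\mathrm{rank}\,\F_N\ge cN$ for some $c>0$ and all sufficiently divisible $N$. Nefness of $\F_N$ on the curve $Z$ follows from weak positivity of pushforwards of pluri-log-relative canonical sheaves, in the positive-characteristic form developed by Patakfalvi and Ejiri and recorded in the preliminaries, since on a smooth curve a weakly positive locally free sheaf is nef.

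The main obstacle is justifying the canonical bundle formula---in particular the nefness of $B$ and the existence of the effective $g$-exceptional $E$---together with the required weak positivity of $\F_N$ in characteristic $p>5$, where Kawamata--Viehweg vanishing is unavailable. Both rely on the positive-characteristic MMP toolkit for $3$-folds recalled in Section~\ref{section:mmp} and on Patakfalvi/Ejiri-type positivity estimates; the favourable geometry (the base $Z$ being a curve) allows the customary characteristic-zero arguments to be adapted after suitable birational modification.
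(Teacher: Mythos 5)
Your overall skeleton matches the paper's: factor $f$ through the relative Iitaka fibration, produce a sheaf on the intermediate surface whose pushforward to $Z$ is nef and injects into $f_*\omega_{X/Z}^{\otimes m}$, and get the linear rank growth from Riemann--Roch on the generic fiber of the surface over $Z$. But the step you yourself flag as ``the main obstacle'' is a genuine gap, not a routine adaptation. You rest the construction on a canonical bundle formula $NK_{X/Y}\sim g^*(NB)+E$ for the elliptic fibration $g:X\to Y$ of a $3$-fold over a surface, with $B$ nef and $E$ effective exceptional. No such formula (in particular, nefness of the moduli part) is available in positive characteristic for this relative dimension; it is not among the tools recalled in Section~\ref{section:mmp}, and the appeal to ``the positive-characteristic MMP toolkit'' does not supply it. The paper deliberately avoids any canonical bundle formula: it first runs the MMP over $Z$ so that $K_{X/Z}$ becomes $f$-nef and numerically trivial on the elliptic fibers, then uses Waldron's descent lemma (Lemma~\ref{l-linear-pullback}) to write $u^*K_{X/Z}\sim_{\Q}h^*H$ with $H$ nef and $g$-big on $Y$ --- note it descends $K_{X/Z}$, not $K_{X/Y}$ --- and replaces the canonical bundle formula by the much weaker Chen--Zhang theorem $\kappa(X',K_{X'/Y})\ge 0$, which only gives an injection $h^*\omega_{Y/Z}^{\otimes m}\hookrightarrow\omega_{X'/Z}^{\otimes m}$ but is all that is needed to compare $K_{Y/Z}+lH$ with a multiple of $K_{X/Z}$.

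A second, smaller gap: you deduce nefness of $\F_N=h_*(\omega_{Y/Z}^{\otimes N}\otimes\O_Y(NB))$ from ``weak positivity of pushforwards of pluri-log-relative canonical sheaves.'' The Patakfalvi/Ejiri-type results require $F$-purity or global sharp $F$-splitting hypotheses on fibers that you have not verified, and they do not apply to an arbitrary twist $\O_Y(NB)$. The paper instead proves exactly the needed statement as Lemma~\ref{mthplem}: for a nef and relatively big $H$, the sheaf $g_*\O_Y(K_{Y/Z}+lH)$ is nef for $l\gg0$, via Tanaka's vanishing theorem for surfaces combined with a Frobenius-amplitude argument. You would need to either prove an analogue of that lemma or verify the hypotheses of an existing weak positivity theorem; as written, neither the nefness of $B$ nor the nefness of $\F_N$ is established. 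Finally, the assertion that passing to a birational model ``does not alter $f_*\omega_{X/Z}^{\otimes m}$'' needs care: the factorization through $Y$ only exists on a resolution $X'$ of the minimal model, and the paper compares the two sides explicitly through $f_*u_*\O_{X'}(mK_{X'/Z}+u^*lmK_{X/Z})\cong f_*\O_X(m(l+1)K_{X/Z})$ rather than by a blanket birational invariance.
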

%
%
Before proving Theorem \ref{mthp}, we recall some results.
\begin{thm}[\textup{\cite[3.2]{CZ13}}]\label{thm:ellfib_intro}\samepage
Let $f:X\to Z$ be a surjective morphism between smooth projective varieties, over an algebraically closed field of positive characteristic, whose geometric generic fiber is a smooth elliptic curve. Then $\kappa(X,K_{X/Z})\ge0$.
\end{thm}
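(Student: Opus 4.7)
The plan is to replace $X$ by a relative minimal model $X'/Z$, factor the relative Iitaka fibration through an intermediate surface $\pi\colon Y\to Z$ whose general fibers are the Iitaka curves of the surface fibers, apply Theorem~\ref{thm:ellfib_intro} to the resulting elliptic fibration $h\colon X'\to Y$, and extract the required nef subbundle as a piece of $\pi_*$ of a suitable nef line bundle on $Y$. Concretely, since $\kappa(X_{\ol\eta})=1\ge 0$, $K_X$ is pseudo-effective over $Z$, so by Theorem~\ref{thm:mmp}(1) there is a relative log minimal model $f'\colon X'\to Z$; MMP steps preserve pluricanonical sections, so it suffices to exhibit the nef subbundle inside $f'_*\omega_{X'/Z}^{[m]}$. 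Abundance for surfaces of Kodaira dimension one ensures that $K_{X'/Z}$ is $f'$-semi-ample on the generic fiber, and after resolving singularities we obtain a factorization
\[
X' \xrightarrow{\;h\;} Y \xrightarrow{\;\pi\;} Z
\]
with $\pi$ of relative dimension one and $h$ having an elliptic curve as general geometric fiber.

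For $m$ sufficiently divisible, one descends $mK_{X'/Z}$ to $Y$: since $K_{X'/Z}$ restricts to zero on a general elliptic fiber of $h$, there exist a Cartier divisor $L_m$ on $Y$ and an effective $h$-exceptional divisor $E_m$ with $mK_{X'/Z}=h^*L_m+E_m$, whence $f'_*\omega_{X'/Z}^{[m]}=\pi_*\O_Y(L_m)$. The restriction of $L_m$ to a general fiber of $\pi$ has degree $m\cdot d_0$ with $d_0>0$ coming from the canonical bundle formula applied to the $\kappa=1$ surface fiber, so $\pi_*\O_Y(L_m)$ has rank of order $cm$ on $Z$. Applying Theorem~\ref{thm:ellfib_intro} to $h$ gives effectiveness of $K_{X'/Y}$, and combining this with the canonical bundle formula for elliptic surfaces in characteristic $p>5$ allows one to take $L_m$ nef on $Y$ after a pullback-twist from $Z$ that does not affect the rank of the pushforward. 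This upgrade from effectiveness to nefness is the main obstacle: in positive characteristic the canonical bundle formula is complicated by wild fibers and inseparable moduli, and one needs its characteristic-$p$ refinements to rule out pathologies and to control the contribution of the discriminant locus.

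Finally, to extract a nef subbundle of $\pi_*\O_Y(L_m)$ on $Z$ of rank $\ge cm$, we split by the genus $g(Y_\eta)$ of the general fiber of $\pi$. If $g(Y_\eta)\ge 2$, then semi-positivity results in characteristic $p>5$ for pushforwards of nef line bundles under high-genus curve fibrations give nefness of the whole pushforward. If $g(Y_\eta)=0$, then $\pi$ is a ruled surface, $\pi_*\O_Y(L_m)$ splits on $Z$ into line bundles, and those of non-negative degree form a nef subbundle of rank $\sim cm$. If $g(Y_\eta)=1$, then $\pi$ is an elliptic fibration over $Z$, and one applies Theorem~\ref{thm:facts on vb on ell curve} together with Proposition~\ref{prop:decomp}: after an \'etale base change, each indecomposable summand of $\pi_*\O_Y(L_m)$ becomes a direct sum of line bundles, and the non-negative-degree summands again supply a nef subbundle of linearly growing rank, completing the construction.
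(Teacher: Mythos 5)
Your proposal does not address the statement you were asked to prove. The statement is Theorem~\ref{thm:ellfib_intro}: for a surjective morphism $f\colon X\to Z$ of smooth projective varieties whose geometric generic fiber is a smooth elliptic curve, one has $\kappa(X,K_{X/Z})\ge0$. What you have written instead is an outline of the proof of Theorem~\ref{mthp} (the existence of a nef subbundle of rank at least $cm$ inside $f_*\omega_{X/Z}^m$ for a $3$-fold fibered over a curve with $\kappa(X_{\ol\eta})=1$): the relative minimal model, the factorization through the relative Iitaka surface $Y$, the descent of $mK_{X'/Z}$ to a divisor on $Y$, and the case division by the genus of the fibers of $Y\to Z$ all belong to that other argument. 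Worse, your outline explicitly \emph{applies} Theorem~\ref{thm:ellfib_intro} to the elliptic fibration $h\colon X'\to Y$ in order to get $\kappa(X',K_{X'/Y})\ge0$; as a proof of Theorem~\ref{thm:ellfib_intro} itself this is circular, and no independent argument for the effectivity of a pluri-relative-canonical system of an elliptic fibration is supplied anywhere in your text.

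Note also that in the paper Theorem~\ref{thm:ellfib_intro} is not proved at all; it is quoted from Chen--Zhang \cite[3.2]{CZ13} and used as a black box (precisely in the proof of Theorem~\ref{mthp}, in the way your outline describes). A genuine proof would have to work in the generality of the statement --- arbitrary dimension of $X$ and $Z$, relative dimension one, any positive characteristic, with no hypothesis $p>5$ --- and would rest on an analysis of elliptic fibrations (e.g.\ the positive-characteristic canonical bundle formula of Bombieri--Mumford together with a treatment of wild fibers, or a semipositivity statement for $f_*\omega_{X/Z}$), none of which appears in your proposal. As it stands there is no proof of the claimed inequality $\kappa(X,K_{X/Z})\ge0$.
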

The following lemma will be frequently used.
\begin{lem}[\textup{\cite[Lemma 3.2]{Wal15}}] \label{l-linear-pullback}
Let $f: X \rightarrow Z$ be a fibration between normal quasi-projective varieties.  Let $L$ be a $f$-nef $\mathbb{Q}$-Cartier divisor on $X$ such that $L_{\eta}\sim_{\mathbb{Q}}0$ where $\eta$ is the generic point of $Z$.  Assume $\dim Z\le 3$. Then there exist a diagram $$ \xymatrix{ X'\ar[r]^\phi\ar[d]_{f'} & X\ar[d]^f\\ Z'\ar[r]^\psi & Z } $$ with $\phi,\psi$ projective birational, and an $\mathbb{Q}$-Cartier divisor $D$ on $Z'$ such that $\phi^* L\sim_{\mathbb{Q}} f'^*D$.
Furthermore, if $f$ is flat and $Z$ is $\Q$-factorial, then we can take $X'=X$ and $Z'=Z$.
\end{lem}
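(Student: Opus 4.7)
The plan is to reduce, via the relative MMP and the fiberwise Iitaka fibration, to an intermediate surface $g: Y \to Z$ carrying a nef, $g$-ample $\Q$-divisor $L$ with $f_*\omega_{X/Z}^m \cong g_*\O_Y(mL)$, and then to use Chen--Zhang's Theorem \ref{thm:ellfib_intro} to convert this into positivity of a subbundle.

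Since $(X,0)$ is klt ($X$ is smooth) and $K_{X/Z}$ is pseudo-effective over $Z$ (because $\kappa(X_{\ol\eta}) \geq 0$), Theorem \ref{thm:mmp}(1) yields a relative minimal model $X \dashrightarrow X^{\min}$ over $Z$ with $X^{\min}$ $\Q$-factorial terminal and $K_{X^{\min}/Z}$ nef over $Z$; all pushforwards $f_*\omega_{X/Z}^m$ are unchanged. Since $K_{X^{\min}_{\ol\eta}}$ is semi-ample (abundance for minimal surfaces of $\kappa \geq 0$), spreading out the fiberwise Iitaka fibration to a global contraction over $Z$ gives $\pi : X^{\min} \to Y$ over $g : Y \to Z$ with $\pi_*\O_{X^{\min}} = \O_Y$, generic fiber of $\pi$ a smooth elliptic curve, and $K_{X^{\min}/Z} \sim_{\Q} \pi^* L$ for a $g$-ample $\Q$-divisor $L$ on the normal surface $Y$. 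Since $\pi^* L$ is nef, so is $L$. Hence $f_*\omega_{X/Z}^m \cong g_*\O_Y(mL)$ for $m$ sufficiently divisible.

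Apply Theorem \ref{thm:ellfib_intro} to the elliptic fibration $\pi$ to obtain $\kappa(X^{\min}, K_{X^{\min}/Y}) \geq 0$. Because $K_{X^{\min}/Y} = \pi^*(L - K_{Y/Z})$ is pulled back from $Y$ and $\pi_*\O_{X^{\min}} = \O_Y$, effectivity descends and $L - K_{Y/Z}$ is $\Q$-effective on $Y$. Hence for $m$ sufficiently divisible there is an injection $\omega_{Y/Z}^m \hookrightarrow \O_Y(mL)$, and pushing forward to $Z$ yields
\[
g_*\omega_{Y/Z}^m \hookrightarrow g_*\O_Y(mL) \cong f_*\omega_{X/Z}^m.
\]

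Finally, produce the nef subbundle of rank $\geq cm$ by case analysis on the genus $q$ of a general $g$-fiber $Y_z$. When $q \geq 2$ the fibration $g$ is of general type, $g_*\omega_{Y/Z}^m$ has rank $(2m-1)(q-1) \sim cm$, and by standard positivity for pushforwards of pluricanonical sheaves of curve-over-curve fibrations of general type in characteristic $p>5$, it is nef, giving the desired subbundle. When $q \leq 1$ the source $g_*\omega_{Y/Z}^m$ has rank $\leq 1$ and is insufficient; the nef subbundle must instead be extracted from $g_*\O_Y(mL)$ directly, using nefness of $L$ and $g$-ampleness together with the elliptic-surface structure of $Y\to Z$ when $q=1$ (via Theorem \ref{thm:ellfib_intro} applied to $g$) or the ruled structure when $q=0$ (writing $Y$ as birational to $\mathbb{P}(V)$ over $Z$ for a rank-$2$ bundle $V$ and computing $g_*\O_Y(mL)$ as a twisted symmetric power of $V^\vee$ whose Harder--Narasimhan filtration supplies the nef part), in either case possibly after a finite base change of $Z$ using Proposition \ref{prop:decomp} and Lemma \ref{l-linear-pullback} to descend. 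The main technical obstacle is this low-genus case: one must extract enough positivity of $g_*\O_Y(mL)$ from the concrete fibration structure and the vector-bundle tools of Section \ref{section:mmp}, since weak positivity of $g_*\omega^m_{Y/Z}$ fails to provide sufficient rank.
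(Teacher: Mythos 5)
Your proposal does not prove the statement in question. Lemma \ref{l-linear-pullback} asserts that an $f$-nef $\Q$-Cartier divisor $L$ on $X$ whose restriction to the generic fibre is $\Q$-linearly trivial descends, after projective birational modifications $\phi,\psi$ of $X$ and $Z$, to a $\Q$-Cartier divisor $D$ on $Z'$ with $\phi^*L\sim_{\Q}f'^*D$ (and that one may take $X'=X$, $Z'=Z$ when $f$ is flat and $Z$ is $\Q$-factorial). What you have written is instead an attempted proof of Theorem \ref{mthp}: you run a relative MMP, pass to the relative Iitaka fibration, and try to extract a nef subbundle of rank at least $cm$ from $f_*\omega_{X/Z}^m$. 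Nowhere do you construct the divisor $D$, the modifications $\phi,\psi$, or verify the relation $\phi^*L\sim_{\Q}f'^*D$; the hypotheses of the lemma (an arbitrary $f$-nef $L$ with $L_\eta\sim_\Q 0$ on a normal quasi-projective $X$ of relative dimension possibly zero over a base of dimension up to three) play no role in your argument. Worse, your write-up explicitly invokes Lemma \ref{l-linear-pullback} as a tool (``using Proposition \ref{prop:decomp} and Lemma \ref{l-linear-pullback} to descend''), which would be circular even if the target statement were the one you addressed.

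For comparison: the paper gives no proof of this lemma at all --- it is quoted from Waldron \cite[Lemma 3.2]{Wal15}. The actual argument there is of a quite different nature: since $L_\eta\sim_\Q 0$, some multiple $mL$ is linearly equivalent to a pullback over a dense open subset of $Z$; one then flattens $f$ and resolves the base, and shows that the difference between $mL$ and the pullback of the closure of that divisor is a vertical, $f$-numerically trivial, $f$-nef divisor, which is itself a pullback by a negativity-lemma/Zariski-type argument. None of these ingredients appears in your proposal. Separately, even read as a proof of Theorem \ref{mthp}, your argument is incomplete: the low-genus case is acknowledged as ``the main technical obstacle'' and only a speculative strategy is sketched, whereas the paper's proof of that theorem avoids any case division on the genus of the fibres of $g$ by applying Tanaka's vanishing theorem (Lemma \ref{mthplem}) to make $g_*\O_Y(K_{Y/Z}+lH)$ nef and then mapping its tensor powers into $f_*\O_X(m(l+1)K_{X/Z})$.
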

The next lemma is a consequence of Tanaka's vanishing theorem for surfaces \cite{Tan15x}.
\begin{lem}\label{mthplem}
Let $g:Y\to Z$ be a generically smooth surjective morphism from a smooth projective surface to a smooth projective curve. Let $H$ be a nef and $g$-big divisor on $Y$. Then $g_*\O_Y(K_{Y/Z}+lH)$ is a nef vector bundle for every $l\gg 0$.
\end{lem}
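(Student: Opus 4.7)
The plan is to prove nefness by contradiction, using the standard criterion on a smooth projective curve: a vector bundle $\mathcal{E}$ on $Z$ is nef if and only if no line bundle subsheaf of $\mathcal{E}^\vee$ has strictly positive degree. First note that $\mathcal{F}_l:=g_*\O_Y(K_{Y/Z}+lH)$ is torsion-free on the smooth curve $Z$ (since $g$ is flat and projective), hence locally free. Suppose for contradiction that there is an injection $L\hookrightarrow \mathcal{F}_l^\vee$ with $\deg L>0$; the goal is to produce a nonzero class in $H^1(Y,K_Y+lH+g^*L)$ that will violate Tanaka's vanishing.

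The heart of the argument is a duality chain. Since $g$ is flat, projective and Cohen-Macaulay of relative dimension $1$ between smooth varieties, relative Grothendieck duality together with the identification $\O_Y(K_{Y/Z}+lH)^\vee\otimes\omega_{Y/Z}=\O_Y(-lH)$ yields a short exact sequence
\[
0 \to \mathcal{T} \to R^1 g_*\O_Y(-lH) \to \mathcal{F}_l^\vee \to 0,
\]
where $\mathcal{T}=\mathcal{E}xt^1(R^1 g_*\O_Y(K_{Y/Z}+lH),\O_Z)$ is a torsion sheaf on $Z$ (because for $l\gg 0$ the sheaf $R^1 g_*\O_Y(K_{Y/Z}+lH)$ has generic stalk zero, as $(K_{Y/Z}+lH)|_{Y_{\bar\eta}}$ has degree $>2g(Y_{\bar\eta})-2$). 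Twisting by $L^{-1}$, applying the projection formula $R^1 g_*\O_Y(-lH)\otimes L^{-1}=R^1 g_*\O_Y(-lH-g^*L)$, and using the long exact sequence (with $H^1$ of torsion on a smooth curve vanishing), the hypothesis $\O_Z\hookrightarrow \mathcal{F}_l^\vee\otimes L^{-1}$ propagates to a nonzero class in $H^0(Z,R^1 g_*\O_Y(-lH-g^*L))$. The Leray spectral sequence for $g$, combined with $H^p(Z,\cdot)=0$ for $p\ge 2$, then gives a surjection $H^1(Y,\O_Y(-lH-g^*L))\twoheadrightarrow H^0(Z,R^1 g_*\O_Y(-lH-g^*L))$, and Serre duality on $Y$ converts this into a nonzero class in $H^1(Y,K_Y+lH+g^*L)$.

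To close the argument, one checks that $lH+g^*L$ is nef and big on $Y$. It is nef as a sum of nef divisors, and if $F$ denotes a general fiber of $g$, then $F^2=0$, $H\cdot F>0$ (since $H$ is $g$-big), and $H^2\ge 0$, so
\[
(lH+g^*L)^2 = l^2 H^2 + 2l\deg(L)(H\cdot F) > 0
\]
for any $l\ge 1$. Tanaka's Kodaira-type vanishing for klt projective surfaces in characteristic $p>0$ \cite{Tan15x}, applied to the smooth (hence klt) surface $Y$, then gives $H^1(Y,K_Y+lH+g^*L)=0$, contradicting the nonvanishing produced above and proving that $\mathcal{F}_l$ is nef.

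The main technical subtlety I expect is the potential torsion $\mathcal{T}$ in the Grothendieck duality sequence, coming from $R^1 g_*\O_Y(K_{Y/Z}+lH)$ being nonzero at special fibers on which $H$ degenerates from ample to merely nef (e.g.\ when a fiber is reducible and $H$ has zero intersection with some component). The argument absorbs this torsion into the kernel of $R^1 g_*\O_Y(-lH)\twoheadrightarrow \mathcal{F}_l^\vee$, and the vanishing of higher cohomology of torsion on a smooth curve makes it invisible to the estimate. The role of $l\gg 0$ is essentially bookkeeping: to ensure $\mathcal{F}_l$ is nonzero and $R^1 g_*\O_Y(K_{Y/Z}+lH)$ is generically zero; once these hold, the nefness argument itself is uniform in $l$.
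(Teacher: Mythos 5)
Your argument takes a genuinely different, characteristic-zero-style route (duality plus vanishing) from the paper's proof, which instead establishes generic global generation of ${F_Z^e}^*\bigl(g_*\O_Y(K_{Y/Z}+lH)\bigr)\otimes\O_Z(A)$ for every $e>0$ by means of the surjectivity of Frobenius trace maps on general fibres (\cite[Corollary 2.23]{Pat14}) and then concludes by \cite[Proposition 4.7]{Eji15}. Unfortunately your route has a critical gap at the very first step: the nefness criterion you invoke is false in positive characteristic. On a smooth projective curve $Z$, the condition that no line subsheaf of $\E^\vee$ has positive degree is equivalent to $\mu_{\min}(\E)\ge 0$, whereas nefness of $\E$ requires $\mu_{\min}(\pi^*\E)\ge 0$ for \emph{every} finite morphism $\pi:C\to Z$ from a smooth curve, including Frobenius. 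Since semistability is not preserved by Frobenius pullback in characteristic $p$ (there are stable bundles of degree $0$ on curves of genus $\ge 2$ whose Frobenius pullback admits a quotient line bundle of negative degree), $\mu_{\min}(\E)\ge 0$ does not imply that $\E$ is nef. Your duality computation therefore proves only $\mu_{\min}\bigl(g_*\O_Y(K_{Y/Z}+lH)\bigr)\ge 0$, which is strictly weaker than the lemma. Repairing this would force you to rerun the argument after arbitrary finite (in particular inseparable) base changes $C\to Z$, where $Y\times_ZC$ need not be smooth and $\pi^*g_*$ need not commute with push-forward; this is precisely the difficulty the paper's Frobenius-theoretic argument is designed to circumvent.

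A secondary issue: you quote Tanaka's theorem as giving $H^1(Y,K_Y+N)=0$ for an arbitrary nef and big divisor $N$ on the smooth surface $Y$. Kodaira vanishing fails for smooth surfaces in characteristic $p$ (Raynaud), and \cite[Theorem 2.6]{Tan15x} is a Fujita-type statement yielding vanishing only after adding a large multiple of a nef divisor; this is why the paper writes its divisor as $K_Y+H+g^*(A-K_Z-z)+(l-1)H$ and takes $l\gg0$ with the auxiliary summand ranging over a bounded family. In your setting you would need $H^1(Y,K_Y+lH+g^*L)=0$ with a threshold on $l$ that is uniform in the a priori unbounded line bundle $L$, which requires justification. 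By contrast, the purely homological part of your argument is sound: the exact sequence $0\to\mathcal{T}\to R^1g_*\O_Y(-lH)\to\F_l^\vee\to0$ from relative duality, the lifting of the section past the torsion term, and the Leray/Serre duality step converting it into a nonzero class in $H^1(Y,K_Y+lH+g^*L)$ are all correct.
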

\begin{proof}
Let $A$ be an ample divisor on $Z$ with $\deg A \geq \deg K_Z + 2$.
Then $A - K_Z - z$ is ample for a closed point $z \in Z$ where $z$ is seen as a divisor on $Z$. Note that $\nu(H) \geq 1$ and $H+g^*(A-K_Z - z)$ is nef and big.
Denote by $Y_z$ the fiber of $g$ over $z$.
By \cite[Theorem 2.6]{Tan15x} we see that
$$H^1(Y,K_Y+H+g^*(A-K_Z) +(l-1)H - Y_z) = H^1(Y,K_Y+H+g^*(A-K_Z -z)  +(l-1)H)=0$$
for $l\gg0$.
Thus for a closed point $z\in Z$, by the long exact sequence arising from taking cohomology of the exact sequence below
$$0 \to \O_Y(K_{Y/Z}+g^*A +lH - Y_z) \to \O_Y(K_{Y/Z}+g^*A+lH) \to \O_Y(K_{Y/Z}+g^*A+lH)|_{Y_z} \to 0$$
we conclude that the restriction $$H^0(Y,K_{Y/Z}+g^*A+lH)\to H^0(Y_z,(K_{Y/Z}+g^*A+lH)|_{Y_z})$$ is surjective.
This implies that $(g_*\O_Y(K_{Y/Z}+lH))(A)$ is generically globally generated.
On the other hand, if $z$ is general then $Y_z$ is smooth, applying \cite[Corollary 2.23]{Pat14}, since $H|_{Y_z}$ is ample, we have that for $l\gg0$ the morphism
\begin{align*}
H^0(Y_{z},\phi^{(e)}_{Y_{z}}\otimes\O_{Y_{z}}(K_{Y_{z}}+lH_{z})):
H^0(Y_{z},K_{Y_{z}}+lp^eH_{z})\to H^0(Y_{z},K_{Y_{z}}+lH_{z})
\end{align*}
is surjective.
This implies that the homomorphism (\cite[Section 2]{Eji15})
\begin{align*}
{g_{Z^e}}_*&(\phi^{(e)}_{Y/Z}\otimes\O_{Y_{Z^e}}((K_{Y/Z}+lH)_{Z^e}))\otimes \O_{Z^e}(A):\\
&g_*\O_{Y}(K_{Y/Z}+lp^eH + g^*(A+z)) \cong g_*\O_{Y}(K_{Y/Z}+lp^eH)\otimes \O_{Z^e}(A) \\
&\to {g_{Z^e}}_*\O_{Y_{Z^e}}((K_{Y/Z}+lH)_{Z^e})\otimes \O_{Z^e}(A+z) \cong {F_Z^{e}}^*g_*\O_Y(K_{Y/Z}+lH)\otimes \O_{Z^e}(A)
\end{align*}
is generically surjective.
Thus for every $e>0$, ${F_Z^e}^*(g_*\O_Y(K_{Y/Z}+lH))\otimes \O_{Z^e}(A)$ is generically globally generated, and hence is nef.
We conclude that $g_*\O_Y(K_Y+lH)$ is nef by applying \cite[Proposition 4.7]{Eji15}.
\end{proof}
\begin{proof}[Proof of Theorem \ref{mthp}]
Let $W$ be a minimal model of $X$ over $Z$. Let $\rho:X_{\ol\eta}\to W_{\ol\eta}$ be the induced morphism. Since $\rho_*\O_{X_{\ol\eta}}\cong\O_{W_{\ol\eta}}$, $W_{\ol\eta}$ is normal. Furthermore, since $W$ is terminal, we have $K_{X_{\ol\eta}}\ge \rho^*K_{W_{\ol\eta}}$, and hence $W_{\ol\eta}$ has at most canonical singularities. In particular, replacing $X$ with a minimal model, with the loss of smoothness we may assume that $K_{X/Z}$ is $f$-nef.

Then by \cite[Theorem 1.2]{Tan14}, $K_{X_{\ol\eta}}$ is semi-ample, and since $p>5$, the geometric generic fiber of the Iitaka fibration $I_{\ol\eta}: X_{\ol\eta} \to C_{\ol\eta}$ is a smooth elliptic curve over $k(\ol\eta)$ by \cite[Theorem 7.18]{Bad01}. For the generic fiber $X_{\eta}$ and sufficiently divisible positive integer $n$, since $H^0(X_{\ol\eta}, nK_{X_{\ol\eta}}) \cong H^0(X_{\eta}, nK_{X_{\eta}})\otimes_{k(\eta)} k(\ol\eta)$, we see that the Iitaka fibration $I_{\ol\eta}: X_{\ol\eta} \to C_{\ol\eta}$ coincides with the Iitaka fibration $I_{\eta}: X_{\eta} \to C_{\eta}$ tensoring with $k(\ol\eta)$. Thus the the geometric generic fiber of $I_{\eta}$ is a smooth elliptic curve.

Considering the relative Iitaka fibration of $f: X \to Z$, whose geometric generic fiber is a smooth elliptic curve, we get a birational morphism $u:X'\to X$, a fibration $g:Y\to Z$ with $Y$ smooth, and an elliptic fibration $h:X'\to Y$ fitting into the following commutative diagram:
$$\xymatrix{&X'\ar[d]_{h}\ar[r]^{u} &X \ar[d]^{f}\\ &Y\ar[r]^{g} &Z. }$$
Note that the geometric generic fiber $C_{\ol\eta}$ of $g:Y\to Z$ is normal, and hence smooth.
By Lemma \ref{l-linear-pullback}, we may assume that $u^*K_{X/Z}\sim_{\Q}h^*H$ for a nef $g$-big $\Q$-Cartier divisor on $Y$.
By Theorem \ref{thm:ellfib_intro}, we have $\kappa(X',K_{X'/Y})\ge0$, and hence there exists an injective homomorphism $h^*\o_{Y/Z}^m\to\o_{X'/Z}^m$ for sufficiently divisible $m>0$.
Let $l\gg0$ be an integer such that $lH$ is Cartier and $u^*lK_{X/Z}\sim h^*lH$. Then we have natural homomorphisms
\begin{align*}
(g_*\O_Y(K_{Y/Z}+lH))^{\otimes m}&\to g_*\O_Y(m(K_{Y/Z}+lH))\cong g_*h_*\O_{X'}(mh^*(K_{Y/Z}+lH))\\
&\hookrightarrow f_*u_*\O_{X'}(mK_{X'/Z}+u^*lmK_{X/Z})\cong f_*\O_{X}(m(l+1)K_{X/Z}).
\end{align*}
Replacing $l$ if necessary, we may assume that the first homomorphism is generically surjective.
By Lemma \ref{mthplem}, $g_*\O_Y(K_{Y/Z}+lH)$ is nef, and hence so is $g_*\O_Y(m(K_{Y/Z}+lH))$. This completes the proof.
\end{proof}
\section{The case $\kappa(X_{\ol\eta})=0$}\label{section:k(F)=0}
In this section, we prove Theorem \ref{thm:c31_intro} in the case when the Kodaira dimension of the geometric generic fiber is equal to zero. It is proved as a consequence of Theorems \ref{thm:ps-eff_intro} and \ref{thm:sa_intro}.
\begin{thm}[\textup{\cite{Eji16}}]\label{thm:ps-eff_intro}\samepage
Let $f:X\to Z$ be a surjective morphism from a normal projective variety $X$ 
over an algebraically closed field of characteristic $p>0$ 
to a smooth projective variety $Z$,  
and let $\Delta$ be an effective $\Q$-divisor on $X$ 
such that $a\Delta$ is integral for some $a>0$ not divisible by $p$. 
Assume that $(X_{\ol\eta},\Delta_{\ol\eta})$ is $F$-pure, where $\ol\eta$ is the geometric generic point of $Z$. 
If $K_X+\Delta\sim_{\Q}f^*(K_Z+L)$ for some $\Q$-divisor $L$ on $Z$, then $L$ is pseudo-effective.
\end{thm}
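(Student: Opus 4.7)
The plan is to realize $\O_Z(mL)$ as a pushforward $f_*\O_X\bigl(m(K_{X/Z}+\Delta)\bigr)$ for a suitable positive integer $m$, and then to deduce pseudo-effectivity from a Frobenius-based positivity statement whose input is the $F$-purity of the geometric generic fiber. After passing to the Stein factorization of $f$ (and noting that pseudo-effectivity descends along finite surjective morphisms), we may assume $f$ is a fibration. Choose $m$ divisible by $a$ with $(m,p)=1$, large enough that $m(K_X+\Delta)\sim f^*m(K_Z+L)$ as Cartier divisors. Then the projection formula combined with $f_*\O_X\cong\O_Z$ identifies
$$f_*\O_X\bigl(m(K_{X/Z}+\Delta)\bigr)\cong\O_Z(mL),$$
so it suffices to show this line bundle is nef.

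Next, exploit $F$-purity. Since $(a,p)=1$, pick $e>0$ with $p^e\equiv 1\pmod{a}$, so that $(p^e-1)\Delta$ is integral. $F$-purity of $(X_{\ol\eta},\Delta_{\ol\eta})$ provides a non-zero $\O_{X_{\ol\eta}}$-linear map
$$(F_{X_{\ol\eta}}^{e})_{*}\O_{X_{\ol\eta}}\bigl((1-p^e)(K_{X_{\ol\eta}}+\Delta_{\ol\eta})\bigr)\longrightarrow\O_{X_{\ol\eta}},$$
a $\Delta$-twisted Frobenius splitting. Spreading out and extending over $Z$, this globalizes to a homomorphism
$$\phi^{(e)}_{X/Z}\colon (F_{X/Z}^{e})_{*}\O_X\bigl((1-p^e)(K_{X/Z}+\Delta)\bigr)\longrightarrow\O_X$$
(in the framework of \cite[Section 2]{Eji15}) that is surjective over an open subset of $X$ dominant over $Z$. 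Tensoring $\phi^{(e)}_{X/Z}$ with $\O_X(p^e m(K_{X/Z}+\Delta))$ and pushing down by $f$ yields, via the projection formula, a homomorphism
$$\rho_e\colon f_*\O_X\bigl(m(K_{X/Z}+\Delta)\bigr)\longrightarrow F_Z^{e*}f_*\O_X\bigl(m(K_{X/Z}+\Delta)\bigr)$$
that is an isomorphism at the generic point of $Z$.

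Finally, fix an ample divisor $A$ on $Z$ of degree large enough that $\O_Z(mL+A)$ is globally generated. The map $\rho_e$ then implies that $F_Z^{e*}\O_Z(mL)\otimes\O_Z(A)$ is generically globally generated for every large $e$ with $p^e\equiv 1\pmod{a}$, and the Frobenius-stability criterion \cite[Proposition 4.7]{Eji15} forces $\O_Z(mL)$ to be nef. Hence $L$ is pseudo-effective. The main obstacle is the construction of $\phi^{(e)}_{X/Z}$: $F$-purity is provided only on the geometric generic fiber, so one must spread out this splitting to a neighbourhood of $\eta\in Z$ and then extend it compatibly to all of $Z$, while carefully tracking the denominators of $\Delta$ (which is exactly what forces the coprimality hypothesis $(a,p)=1$). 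This is the technical heart of Ejiri's machinery in \cite{Eji15, Eji16}.
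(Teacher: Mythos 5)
The paper itself offers no proof of this statement: it is quoted from \cite[Theorem 4.5]{Eji16} (applied with $D=-(K_Z+L)$), with \cite[Theorem 1.6]{Pat14} cited for the case of a curve base, so the only question is whether your sketch correctly outlines that argument. It has the right overall shape --- relativize the $F$-splitting of $(X_{\ol\eta},\Delta_{\ol\eta})$ to a trace map $\phi^{(e)}_{(X/Z,\Delta)}$, push forward, and extract positivity of $L$ --- but the central map $\rho_e$ is misstated in a way that breaks the deduction. Twisting $\phi^{(e)}_{(X/Z,\Delta)}\colon F^{e}_{X/Z*}\O_X((1-p^e)(K_{X/Z}+\Delta))\to\O_{X_{Z^e}}$ by $\O(m(K_{X/Z}+\Delta)_{Z^e})$ and pushing forward yields a homomorphism
$$f_*\O_X\bigl(((m-1)p^e+1)(K_{X/Z}+\Delta)\bigr)\longrightarrow {F_Z^{e}}^*f_*\O_X\bigl(m(K_{X/Z}+\Delta)\bigr),$$
because the relative Frobenius pulls the twist back to $p^{e}m(K_{X/Z}+\Delta)$ and $(1-p^e)+p^{e}m=(m-1)p^e+1$. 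The source is \emph{not} $f_*\O_X(m(K_{X/Z}+\Delta))$; its exponent grows like $p^e$. (Compare the map $\G((l-1)p^e+1,m_0p^e)\to{F_Z^e}^*\G(l,m_0)$ in the paper's proof of Theorem \ref{thm:sa_intro}, which has the correct shape.) Consequently the claim that ${F_Z^{e}}^*\O_Z(mL)\otimes\O_Z(A)$ is generically globally generated for a \emph{fixed} ample $A$ does not follow: one must produce sections of the ever-growing source twisted by the fixed $A$, and nothing in your sketch does that.

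Closing this gap is exactly the technical content of \cite{Eji16}: one introduces an auxiliary ample divisor on $X$, uses Fujita vanishing on $X_{\ol\eta}$ to make the multiplication maps $\G(l,m)\otimes\G(l',m')\to\G(l+l',m+m')$ generically surjective, and uses \cite[Corollary 2.23]{Pat14} --- which itself requires the ample twist, since $F$-purity alone does not give surjectivity of $H^0(X_{\ol\eta},\phi^{(e)}\otimes\L)$ for an arbitrary nef $\L$ --- to make the trace maps generically surjective on pushforwards; the large source is then dominated by tensor powers of finitely many fixed sheaves, and a degree count yields the positivity of $L$. This is precisely the mechanism of the paper's own proof of Theorem \ref{thm:sa_intro}, which you may take as a template. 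Two smaller points: for $\dim Z\ge2$ this argument gives weak positivity (hence pseudo-effectivity) of $mL$ rather than nefness, which is in any case all that is claimed; and the Stein factorization reduction needs care when $\dim Z\ge2$, since the intermediate variety is only normal rather than smooth.
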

Theorem \ref{thm:ps-eff_intro} follows from \cite[Theorem 4.5]{Eji16} (by setting $D=-(K_Z+L)$), 
and it is also proved by Patakfalvi \cite[Theorem 1.6]{Pat14} when $Z$ is a curve.
\begin{thm}\label{thm:sa_intro}\samepage
With the same notation and assumptions as in Theorem \ref{thm:ps-eff_intro}, if $Z$ is an elliptic curve, then $L$ is semi-ample.
\end{thm}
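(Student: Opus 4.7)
Since $Z$ is an elliptic curve, $K_Z\sim 0$, so the hypothesis becomes $K_X+\Delta\sim_\Q f^*L$. By Theorem \ref{thm:ps-eff_intro}, $L$ is pseudo-effective, which on the smooth projective curve $Z$ means $\deg L\ge 0$. If $\deg L>0$, then $L$ is ample on $Z$, hence semi-ample. So I may assume $\deg L=0$, and the task becomes showing that $L$ is torsion in $\Pic^0(Z)\otimes\Q$, which on the elliptic curve $Z$ is equivalent to semi-ampleness.

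Fix $e>0$ and $m>0$ sufficiently divisible so that $(p^e-1)\Delta$ is integral, $mL$ is Cartier, and $m(K_X+\Delta)\sim f^*(mL)$ is an honest linear equivalence; then the projection formula gives $f_*\O_X(m(K_X+\Delta))\cong\O_Z(mL)$. The Cartier operator supplies a natural map
\[
\tau\colon F_{X,*}^{e}\,\omega_X\!\bigl((p^e-1)\Delta\bigr)\longrightarrow \omega_X
\]
which is nonzero over the generic point of $Z$ precisely because $(X_{\ol\eta},\Delta_{\ol\eta})$ is $F$-pure. Twisting $\tau$ by $\O_X(D)$ for a Cartier divisor $D$ with $K_X+D\sim m(K_X+\Delta)$, and applying the projection formula $F_{X,*}^{e}(\mathcal{F}\otimes F_X^{e,*}\O_X(D))=F_{X,*}^{e}\mathcal{F}\otimes\O_X(D)$, the target becomes $\omega_X(D)\cong f^*\O_Z(mL)$, while substitution of $K_X\sim_\Q f^*L-\Delta$ identifies the source as
\[
\O_X\!\bigl(K_X+(p^e-1)\Delta+p^eD\bigr)\;\cong\;f^*\O_Z\!\bigl((p^e(m-1)+1)L\bigr)\otimes\O_X\!\bigl(2(p^e-1)\Delta\bigr).
\]
Pushing forward to $Z$ via $f_*\circ F_{X,*}^{e}=F_{Z,*}^{e}\circ f_*$ and composing with the natural inclusion $\O_Z\hookrightarrow f_*\O_X(2(p^e-1)\Delta)$ (from effectivity of $\Delta$) produces a morphism
\[
\Phi\colon F_{Z,*}^{e}\,\O_Z\!\bigl((p^e(m-1)+1)L\bigr)\longrightarrow \O_Z(mL).
\]

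Since $\omega_Z\cong\O_Z$, Grothendieck duality for the finite flat morphism $F_Z^e$ gives $F_Z^{e,!}\O_Z(mL)\cong\O_Z(p^emL)$, so that
\[
\Hom_Z\bigl(F_{Z,*}^{e}\O_Z((p^e(m-1)+1)L),\,\O_Z(mL)\bigr)\;=\;H^0\bigl(Z,\O_Z((p^e-1)L)\bigr).
\]
A nonzero $\Phi$ thus furnishes a nonzero global section of the degree-zero line bundle $\O_Z((p^e-1)L)$, forcing it to be trivial, i.e.\ $(p^e-1)L\sim 0$. Consequently $L$ is a $(p^e-1)$-torsion element of $\Pic(Z)\otimes\Q$, hence semi-ample.

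The main technical obstacle is verifying the nonvanishing of $\Phi$. Because $F$-purity is assumed only on $X_{\ol\eta}$, the global Cartier map $\tau$ need not be surjective on $X$, and after twisting, taking $F_{Z,*}^{e}f_*$, and restricting along $\O_Z\hookrightarrow f_*\O_X(2(p^e-1)\Delta)$, one must confirm the composition remains nonzero. This reduces to a computation at the generic point of $Z$, where the $F$-pure splitting of $(X_{\ol\eta},\Delta_{\ol\eta})$ supplies the required nonvanishing; care is needed to handle vertical components of $\Delta$, which enlarge $f_*\O_X(2(p^e-1)\Delta)$ beyond $\O_Z$ and must be tracked through the adjunction, and to ensure Cartier-ness of the intermediate sheaves, which may require passing to a small $\Q$-factorialization of $X$.
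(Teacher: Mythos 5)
Your duality bookkeeping is fine: granting a nonzero $\Phi$, the identification $F_Z^{e!}\O_Z(mL)\cong\O_Z(p^emL)$ on an elliptic curve and the conclusion that a degree-zero line bundle with a nonzero section is trivial do yield $(p^e-1)L\sim0$. But the step you defer as ``the main technical obstacle'' is a genuine gap, not a technicality, and it is not closed by the hypothesis. $F$-purity of $(X_{\ol\eta},\Delta_{\ol\eta})$ is a \emph{local} condition: it gives surjectivity of the sheaf map $\tau$ on the generic fibre, but it says nothing about the induced map on \emph{global} sections when the twist is only numerically trivial. Restricted to $X_\eta$ your map becomes a trace map from (a Frobenius pushforward of) a sheaf linearly equivalent to $2(p^e-1)\Delta_\eta$ plus a torsion twist into $\O_{X_\eta}$, and the nonvanishing of $\Phi$ is exactly the statement that the canonical section maps to a nonzero element of $H^0(X_\eta,\O_{X_\eta})$ --- a global $F$-splitting-type condition (the familiar $S^0\subsetneq H^0$ phenomenon). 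This fails in general: already for $X=E\times Z$ with $E$ a supersingular elliptic curve and $\Delta=0$ the Cartier operator is zero on $H^0(\omega_E)$, so $\Phi=0$ for every $e$; the same happens for any geometric generic fibre that is locally $F$-pure but not globally $F$-split (supersingular K3, non-ordinary abelian or elliptic surface), all of which occur in the intended application. In such cases your argument produces no section of $(p^e-1)L$ and hence no conclusion, even though the theorem remains true there. (A secondary, but also real, issue: a Cartier $D$ with $K_X+D\sim m(K_X+\Delta)$ need not exist on a normal $X$ --- $\Delta$ and $K_X$ are not individually assumed $\Q$-Cartier, and a small $\Q$-factorialization only gives $\Q$-Cartier, not Cartier; one must work with reflexive hulls throughout.)

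This is precisely the difficulty the paper's proof is built to avoid. There, everything is twisted by a fixed ample divisor $A$ on $X$, for which \cite[Corollary 2.23]{Pat14} \emph{does} give generic surjectivity of the twisted trace ${f_{Z^e}}_*(\phi^{(e)}_{(X/Z,\Delta)}\otimes\cdots)$; the price is that one can no longer read off torsionness of $L$ from a single map, and instead the proof pulls back along an isogeny $Z'\to Z$ so that the sheaves $\G(r,m_0)=f_*\O_X(r(K_{X/Z}+\Delta)+m_0A)$ decompose into line bundles (Proposition \ref{prop:decomp}), isolates the finite set $T$ of minimal-degree summands, shows $T$ is stable under $\M\mapsto\M^{p^e}(q b L)$, and deduces $(m-m')bL\sim0$ from finiteness of $T^n$. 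To salvage your route you would have to either strengthen the hypothesis to global $F$-splitting of the geometric generic fibre, or reintroduce the ample twist and then remove it --- which is essentially the paper's argument.
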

\begin{proof}
By Theorem \ref{thm:ps-eff_intro}, we have $\deg L\ge0$.
We may assume that $\deg L=0$, and it suffices to show that $L\sim_{\Q}0$.
Since $(K_X+\Delta)_{\ol\eta}\sim_{\Q}0$, there is an ample Cartier divisor $A$ on $X$ such that $l(K_X+\Delta)_{\ol\eta}+A_{\ol\eta}$ is ample and free for every $l\in a\Z$. Recall that $0<a\in \Z\setminus p\Z$ and $a\Delta$ is integral.
By Fujita's vanishing theorem, there exist some $m_0>0$ such that for every nef Cartier divisor $N$ on $X_{\ol\eta}$,
$\O_{X_{\ol\eta}}((m_0-1)A_{\ol\eta}+N)$ is $0$-regular with respect to $l(K_X+\Delta)_{\ol\eta}+A_{\ol\eta}$ for every $l\in a\Z$.
Then the natural homomorphism
\begin{align*}
H^0(X_{\ol\eta},l(K_X+\Delta)_{\ol\eta}+mA_{\ol\eta})&\otimes H^0(X_{\ol\eta},(m'-1)A_{\ol\eta})\otimes H^0(X_{\ol\eta},l'(K_X+\Delta)_{\ol\eta}+A_{\ol\eta})\\
&\to H^0(X_{\ol\eta},(l+l')(K_X+\Delta)_{\ol\eta}+(m+m')A_{\ol\eta})
\end{align*}
is surjective for every $l,l'\in a\Z$ and $m,m'\ge m_0$. Thus
\begin{align*}
H^0(X_{\ol\eta},l(K_X+\Delta)_{\ol\eta}+mA_{\ol\eta})&\otimes H^0(X_{\ol\eta},l'(K_X+\Delta)_{\ol\eta}+m'A_{\ol\eta})\\
&\to H^0(X_{\ol\eta},(l+l')(K_X+\Delta)_{\ol\eta}+(m+m')A_{\ol\eta})
\end{align*}
is also surjective, and hence the natural homomorphism $$\G(l,m)\otimes\G(l',m')\to\G(l+l',m+m')$$ is generically surjective, where $\G(l,m):=f_*\O_X(l(K_{X/Z}+\Delta)+mA)$.
From now on, we use the same notation as \cite[Sections 2 or 3]{Eji15} or \cite[Section 2]{Eji16}.
Replacing $m_0$ if necessary, by \cite[Corollary 2.23]{Pat14} we may assume that
\begin{align*}
H^0(&X_{\ol\eta},\phi_{(X_{\ol\eta},\Delta_{\ol\eta})}^{(e)}\otimes\O_{X_{\ol\eta}}(N+m_0A_{\ol\eta})):\\
&H^0(X_{\ol\eta},(1-p^e)(K_X+\Delta)_{\ol\eta}+p^e(N+m_0A_{\ol\eta})) \to H^0(X_{\ol\eta},N+m_0A_{\ol\eta})
\end{align*}
is surjective for every $e>0$ with $a|(p^e-1)$ and for every nef Cartier divisor $N$ on $X_{\ol\eta}$. Since $l(K_X+\Delta)_{\ol\eta}$ is nef,
\begin{align*}
{f_{Z^e}}_*&(\phi_{(X/Z,\Delta)}^{(e)}\otimes\O_{X_{Z^e}}(l(K_{X/Z}+\Delta)_{Z^e}+m_0A_{Z^e})):\\
& \G((l-1)p^e+1,m_0p^e) \to {f_{Z^e}}_*\O_{X_{Z^e}}(l(K_{X/Z}+\Delta)_{Z^e}+m_0A_{Z^e})\cong {F_Z^e}^*\G(l,m_0)
\end{align*}
is generically surjective.
Let $b>0$ be an integer such that $a|b$, that $bL$ is integral and that $b(K_X+\Delta)$ is linearly equivalent to $bf^*L$.
By Proposition \ref{prop:decomp}, there exists a finite morphism $\pi:Z'\to Z$ from an elliptic curve $Z'$ such that $\pi^*\G(r,m_0)$ is a direct sum of line bundles for each $0\le r<b$ with $a|r$.
By Lemma \ref{injofpic}, we may replace $L$ and $\G(r,m_0)$ respectively by its pullback by $\pi$.
Set \begin{align*} \F:=&\bigoplus_{0\le r<b,~a|r}\G(r,m_0),\\
\mu:=&\min\{\deg\M|\textup{$\M\in\Pic(Z)$ and $\M$ is a direct summand of $\F$}\}\textup{, and} \\
T:=&\{\M\in\Pic(Z)|\textup{$\deg\M=\mu$ and $\M$ is a direct summand of $\F$}\}\\
=&\{\M_1,\ldots,\M_\lambda\}.
\end{align*}
Then for every $\M_i\in T$, there exists an $0\le s<b$ with $a|s$ such that the composition
\begin{align*}
\G(s,m_0)^{\otimes p^e-1}&\otimes\G(r_{i,e},m_0)\otimes\O_Z(-q_{i,e}bL)\\
&\to\G((s-1)p^e+1,p^em_0)\to{F_Z^e}^*\G(s,m_0)\twoheadrightarrow\M_i^{p^e}
\end{align*}
is generically surjective for every $e>0$ with $a|(p^e-1)$.
Here $q_{i,e}$ and $r_{i,e}$ are integers satisfying $1+s-p^e=-q_{i,e}b+r_{i,e}$ and $0\le r_{i,e}<b$.
Then there exist a line bundle $\M$ which is a direct summand of $\G(s,m_0)^{p^e-1}\otimes\G(r_{i,e},m_0)$ and a non-zero morphism $\M\to\M_i^{p^e}(q_{i,e}bL)$.
By considering the degree of the line bundles, we see that $\M_i^{p^e}(q_{i,e}bL)\cong\M\in T^{p^e}$, where
$$\textstyle T^n:=\{\bigotimes_{1\le i\le\lambda}\M_i^{n_i}\in\Pic(Z)|n_i\ge0,~\sum_{1\le i\le\lambda}n_i=n\}.$$
Fix an integer $e >0$ such that $a|p^e-1$. Set $n:=\lambda(p^e-1)+1$. For every $\mathcal N\in T^n$, there exist $n_1,\ldots,n_\lambda\ge0$ such that
$\mathcal N\cong \bigotimes_{1\le i\le \lambda}\M_i^{n_i}$ and $n'_j:=n_j-p^e\ge0$ for at least one $j$.
Then \begin{align*}
\mathcal N(q_{j,e}bL)\cong(\bigotimes_{i\ne j}\M_i^{n_i})\otimes\M_j^{n'_j}\otimes\M_j^{p^e}(q_{j,e}bL).
\end{align*}
Since $\M_j^{p^e}(q_{j,e}bL)\in T^{p^e}$, we have $\mathcal N(q_{j,e}bL)\in T^{n}$.
Hence for every $m\ge q:=\max\{q_{1,e},\ldots,q_{\lambda,e}\}$,
$$\textstyle \mathcal N(mbL)\in \{\M(k bL)\in\Pic(Z)|\M\in T^n,~0\le k<q\}. $$
Since $T^n$ is a finite set, there are integers $m>m'>0$ such that $\mathcal N(mbL)\cong\mathcal N(m'bL)$, and hence $(m-m')bL\sim0$.
\end{proof}
\begin{proof}[Proof of Theorem \ref{thm:c31_intro}: the case $\kappa(X_{\ol\eta})=0$]
As in the proof of Theorem \ref{mthp}, we may assume that $X$ is minimal over $Z$ and $K_{X_{\ol\eta}}$ is semi-ample, thus $K_{X_{\ol\eta}}\sim_{\Q}0$. By Lemma \ref{l-linear-pullback}, $K_X$ is $\Q$-linearly equivalent to the pullback of $K_Z+L$ for some $\Q$-divisor $L$ on $Z$. In particular $\kappa(X,K_X)=\kappa(Z,K_Z+L)$. It is enough to show that $\kappa(Z,K_Z+L)\ge\kappa(Z)$.
By Lemma \ref{thm:ps-eff_intro}, we see that $L$ is nef. Note that since $X_{\ol\eta}$ has at most rational double points as singularities, $X_{\ol\eta}$ is Gorenstein and  $p>5$, $X_{\ol\eta}$ is $F$-pure by \cite[Section 3]{Art77} and \cite{Fed83}.
When $Z$ is of general type, by Theorem \ref{thm:ps-eff_intro}, we have $K_Z+L$ is big, thus $\kappa(Z,K_Z+L)=\dim Z=\kappa(Z)$.
And when $Z$ is an elliptic curve, by Theorem \ref{thm:sa_intro}, we have $\kappa(Z,K_Z+L)\ge\kappa(Z)$. This completes the proof.
\end{proof}
\section{The case $\kappa(X_{\ol\eta})=1$}\label{section:k(F)=1}
In this section, we consider the case when the Kodaira dimension of the geometric generic fiber is one.
\begin{proof}[Proof of Theorem $\ref{thm:c31_intro}$: the case $\kappa(X_{\ol\eta})=1$]
Let $f:X\to Z$ be a surjective morphism from a smooth projective 3-fold to a smooth projective curve of genus at least one, and let $\ol\eta$ be the geometric generic point of $Z$. Suppose that $\kappa(X_{\ol\eta})=1$.
With the loss of smoothness, by Theorem \ref{thm:mmp} (2) we may assume that $X$ is a minimal model. Then $X_{\ol\eta}$ has canonical singularities by the proof of Theorem \ref{mthp}.

If $g(Z) >1$, then since $f_*\omega_{X/Z}^m$ contains a nef sub-bundle of rank $\geq cm$ for some $c>0$ and any sufficiently divisible $m$ (Theorem \ref{mthp}), by some standard arguments (proof of \cite[Proposition 5.1]{BCZ15}),
we can conclude that
$$\kappa(X) \geq 2 = \kappa(Z) + \kappa(X_{\ol\eta}).$$

So from now on, we assume $g(Z) = 1$. Then $\omega_{X} = \omega_{X/Z}$. We break the proof into several steps.

Step 1: By considering the relative Iitaka fibration and applying Lemma \ref{l-linear-pullback}, we get the following commutative diagram
$$\centerline{\xymatrix{ &X'\ar[d]_h\ar[r]^{\sigma} &X\ar[d]^f  \\ &Y\ar[r]^g  &Z}}$$ where $Y$ is a smooth projective surface, and $h$ is fibration with geometric fiber being a smooth elliptic curve by the proof of Theorem \ref{mthp}, such that $\sigma^*K_X \sim_{\mathbb{Q}} h^*D$ where $D$ is a nef $g$-big divisor on $Y$.

If $D$ is big, then we are done. From now on, we assume the numerical dimension $\nu(K_X)=\nu(D) = 1$. Then we claim that
\begin{cl}\label{cl}
If $X$ has a fibration $f': X \rightarrow W$ to a normal projective curve $W$ such that $K_{F'}$ is numerically trivial, where $F'$ denotes the generic fiber of $f'$.  Assume moreover that there exist $L \in \Pic^0(Z)$ and an integer $m>0$ such that $h^0(X, mK_X + f^*L) >0$.  Then $K_X$ is semi-ample.
\end{cl}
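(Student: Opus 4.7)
The plan is to use the effective divisor $E\sim mK_X+f^*L$ produced by the hypothesis together with the fibration $f'\colon X\to W$ to show that $K_X$ is $\Q$-linearly equivalent to the pullback of a positive-degree $\Q$-divisor on the curve $W$, and hence semi-ample. First, for a general closed point $w\in W$, the fiber $F_w=f'^{-1}(w)$ is a normal projective surface inheriting canonical singularities from the minimal $X$, and $K_{F_w}=K_X|_{F_w}$ is numerically trivial by the hypothesis on $K_{F'}$. Tanaka's abundance theorem for lc surfaces in characteristic $p>5$ (the same input as in the proof of Lemma \ref{adj}) yields $K_{F_w}\sim_{\Q}0$.

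Restricting $E$ to $F_w$ gives an effective divisor numerically equivalent to $mK_{F_w}+(f|_{F_w})^*L\equiv 0$, so $E|_{F_w}=0$ (any effective numerically trivial divisor on a projective variety vanishes); in particular $E$ is $f'$-vertical and $(f|_{F_w})^*L\sim_{\Q}0$ on $F_w$. Now split on $f|_{F_w}\colon F_w\to Z$: if this map is surjective, Lemma \ref{injofpic} yields $L\sim_{\Q}0$ on $Z$, whence $mK_X\sim_{\Q}E$; if it is constant for general $w$, then $f$ factors through $f'$ via some morphism $g\colon W\to Z$ and $f^*L=f'^*(g^*L)$, giving $mK_X+f'^*(g^*L)\sim E$. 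In either case, restricting the resulting relation to the generic fiber $F'$ (using that $E$, being $f'$-vertical, and any pullback from $W$ both vanish on $F'$) produces $mK_X|_{F'}\sim 0$ in $\Pic(F')$, i.e. $K_X|_{F'}\sim_{\Q}0$.

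We then apply Lemma \ref{l-linear-pullback} to $f'\colon X\to W$: the divisor $K_X$ is $f'$-nef (being globally nef) with $\Q$-trivial restriction to the generic fiber, and $\dim W=1\leq 3$. The lemma produces, possibly after a birational modification $\sigma'\colon X_1\to X$, a $\Q$-Cartier divisor $M$ on $W$ with $\sigma'^*K_X\sim_{\Q}f_1'^*M$, where $f_1'\colon X_1\to W$ is the induced fibration. Since $\nu(K_X)=1$, $M$ cannot be numerically trivial, and nefness forces $\deg M>0$; any such $\Q$-divisor on a curve is ample, hence semi-ample, so $\sigma'^*K_X=f_1'^*M$ is semi-ample on $X_1$. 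Semi-ampleness descends along the birational morphism $\sigma'$ via the identification $H^0(X,nK_X)=H^0(X_1,n\sigma'^*K_X)$, yielding semi-ampleness of $K_X$ on $X$.

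The main technical obstacle is establishing $\Q$-linear triviality of $K_{F_w}$ on a general closed fiber, which requires verifying that $F_w$ inherits sufficiently mild singularities for Tanaka's surface abundance to apply. A secondary concern is transferring $\Q$-linear equivalences from general closed fibers to the generic fiber $F'$, but the effective divisor $E$ together with Lemma \ref{l-linear-pullback} itself handle this transfer cleanly, since the latter is formulated directly in terms of the generic fiber.
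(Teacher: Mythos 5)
Your overall architecture matches the paper's: show the relevant pluricanonical divisor is ($\Q$-linearly) pulled back from $W$ with positive degree, then conclude semi-ampleness. The endgame (Lemma \ref{l-linear-pullback} applied to $f'$, positivity of the degree from $\nu(K_X)=1$, descent along a birational modification) is fine, and your explicit treatment of the degenerate case where $f$ factors through $f'$ is a reasonable addition. But there is a genuine gap at the central step: you run the argument on a \emph{general closed fiber} $F_w$, asserting that $F_w$ is a normal projective surface with canonical (or at least lc) singularities and that $K_{F_w}$ is numerically trivial, so that abundance for surfaces over the algebraically closed field applies and gives $K_{F_w}\sim_{\Q}0$. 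In positive characteristic none of this is automatic: generic smoothness fails, and a general closed fiber of a fibration from a terminal $3$-fold need not be normal, reduced in the geometric sense, or lc --- so the version of Tanaka's abundance you want to invoke simply may not apply to $F_w$. You flag this yourself as ``the main technical obstacle,'' but it is exactly the step that carries the proof, and it is left unproved. (The numerical triviality of $K_X|_{F_w}$ for general closed $w$ is also not an immediate consequence of the hypothesis on the generic fiber, though that part is repairable by a Hodge-index argument.)

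The paper's proof avoids closed fibers entirely: since $X$ is terminal it is regular in codimension two, so the generic fiber $F'$ is a \emph{regular} surface over the imperfect field $K(W)$, and one applies Tanaka's abundance theorem for surfaces over imperfect fields \cite[Theorem 0.2]{Tan15} to conclude $K_{F'}\sim_{\Q}0$ directly on $F'$. The effective divisor $D_L\sim mK_X+f^*L$ restricts to an effective numerically trivial, hence zero, divisor on $F'$ (this already uses only the stated hypothesis $K_{F'}\equiv 0$), giving $(mK_X+f^*L)|_{F'}\sim 0$ and then $f^*L|_{F'}\sim_{\Q}0$; Lemma \ref{injofpic}, applied after base change to $\overline{K(W)}$ using that $F'$ dominates $Z\otimes_k K(W)$, shows $L$ is torsion. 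If you replace every occurrence of $F_w$ in your argument by $F'$ and cite \cite{Tan15} in place of abundance over $k$, your proof closes up and coincides with the paper's.
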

\begin{proof}[Proof of the claim]
Take an effective divisor $D_L \sim mK_X + f^*L$.
Since $D_L$ is nef, effective and $D_L|_{F'} \sim_{num}0$, we have
$$(mK_X + f^*L)|_{F'} \sim D_L|_{F'} \sim 0.$$
By Lemma \ref{l-linear-pullback} we can assume $D_L \sim_{\mathbb{Q}} f'^*A$ where $A$ is a divisor on $W$, which is ample since $D_L\ne0$.
So we only need to show that $L \sim_{\mathbb{Q}} 0$.

Since $X$ has at most terminal singularities, it is smooth in codimension one, so $F'$ is a regular surface over the function filed $K(W)$ of $W$. Applying \cite[Theorem~1.1]{Tan15}, since $K_{F'}$ is numerically trivial,
we have $K_{F'}\sim_{\mathbb{Q}} 0$.
Therefore, we conclude that
$$f^*L|_{F'} \sim_{\Q} mK_{F'} + f^*L|_{F'} \sim_{\Q} (mK_X + f^*L)|_{F'} \sim_{\Q} D_L|_{F'} \sim_{\Q} 0.$$
On the other hand, $F'$ is dominant over the curve $Z\otimes_k K(W)$, passing to the algebraic closure of $K(W)$ and applying Lemma \ref{injofpic},
we show that $L$ is torsion.
\end{proof}
\noindent%

Step 2: By Theorem \ref{mthp}, there exists $c>0$ such that for sufficiently divisible $m_1$, $f_*\omega_{X}^{m_1}$ contains a nef sub-bundle $V$ of rank $r_{m_1} \ge cm_1$. If $\deg V >0$, then we are done by some standard arguments (\cite[Propostion 5.1]{BCZ15}).
So we assume that $\deg V =0$, thus by Proposition \ref{prop:decomp} there exists a flat base change between two elliptic curves $\pi: Z_1 \rightarrow Z$ such that $\pi^*V = \oplus_{i=1}^n \mathcal{L}_i$ where $\mathcal{L}_i \in \mathrm{Pic}^0(Z_1)$. Let $X_1$ be the normalization of $X\times_Z Z_1$. Then we get the following commutative diagram
$$\centerline{\xymatrix{ &X_1\ar[d]_{f_1}\ar[r]^{\pi_1} &X\ar[d]^f  \\ &Z_1\ar[r]^{\pi}  &Z}}$$
where $\pi_1$ and $f_1$ denote the natural projections. We have that $\pi^*f_*\omega_{X}^{m_1} \subset f_{1*}\pi_1^*\omega_{X}^{m_1}$ by \cite[Proposition 9.3]{Ha77}, thus
$$\pi^*V = \oplus_{i=1}^n \mathcal{L}_i \subset f_{1*}\pi_1^*\omega_{X}^{m_1}.$$
So we conclude that
$$h^0(X_1, \pi_1^*m_1K_X - f_1^*\mathcal{L}_i) \geq 1,$$
and if $\mathcal{L}_i = \mathcal{L}_j$ for some $j \neq i$ then the strict inequality holds.
Since $\pi^*: \mathrm{Pic}^0(Z) \to \mathrm{Pic}^0(Z_1)$ is surjective, there exists $L_i'$ such that $\mathcal{L}_i \sim \pi^*L_i'$, thus
$$\pi_1^*m_1K_X - f_1^*\mathcal{L}_i \sim \pi_1^*(m_1K_X + f^*L_i').$$
Applying Theorem \ref{ct}, we can find a sufficiently divisible integer $l >0$ such that
$$h^0(X, l(m_1K_X - f^*L_i')) \geq 1.$$
Put $m = lm_1$ and $L_i = lL_i'$. Then $h^0(X, mK_X - f^*L_i) \geq 1$.

If $h^0(X, mK_X - f^*L_i) > 1$, then $h^0(Y, mD - g^*L_i) >1$ by the construction in Step 1. Since $mD - g^*L_i$ is nef and $\nu(mD - g^*L_i) = 1$, the movable part of $|mD - g^*L_i|$ has no base point, hence induces a fibration $g': Y \to W'$ on $Y$ to a curve $W'$. The Stein factorization of the composite morphism $g' \circ h: X' \to W'$ induces a fibration $f'': X' \to W$ from $X'$ to a normal curve $W$, which is defined by the base point free linear system $|\mu^*l(mK_X - f^*L_i)|$ for sufficiently divisible integer $l>0$. Since $\sigma: X' \to X$ is a birational morphism such that $\sigma_*\O_{X'} = \O_X$, we conclude that $|\mu^*l(mK_X - f^*L_i)| = \mu^*|l(mK_X - f^*L_i)|$, thus $|l(mK_X - f^*L_i)|$ has no base point, hence defines
such a fibration $f': X \to W$ as in Claim of Step 1. So $K_X$ is semi-ample, and this completes the proof in this case.

From now on, we can assume $h^0(X, mK_X - f^*L_i) = 1$ and $h^0(X_1, \pi_1^*(mK_X - f^*L_i)) = 1$. For every $i$, we have unique effective divisors $B_i \sim mK_X - f^*L_i$. And by construction, we can assume $\pi_1^*B_i \neq \pi_1^*B_j$ if $i \neq j$, thus $L_i \neq L_j$.
In the following, we only need to show that at least two of $L_i$ are torsion.\\

Step 3: For every $j$, we have unique effective divisor $B_j \sim mK_X - f^*L_j$.
Let $B'$ be the reduced divisor supported on the union of components of $\sum_j B_j$.
Take a smooth log resolution $\mu: \tilde{X} \rightarrow X$ of the pair $(X, B')$. Denote by $\tilde{f}: \tilde{X} \rightarrow Z$ the natural morphism.
Let $\tilde{B}$ be the reduced divisor supported on the union components of $\sum_j \mu^*B_j$.
Consider the dlt pair $(\tilde{X}, \tilde{B})$.
Since $X$ has terminal singularities, there exists an effective $\mu$-exceptional divisor $E$ on $\tilde{X}$ such that
$$K_{\tilde{X}}  \sim_{\Q} \mu^*K_X + E.$$
So $K_{\tilde{X}} + \tilde{B} \sim_{\Q} \mu^*K_X + E + \tilde{B}$
has a weak Zariski decomposition.
By Theorem \ref{thm:mmp} (2), $(\tilde{X}, \tilde{B})$ has a minimal model $(\hat{X}, \hat{B})$ which is dlt, and there exists a natural morphism $\hat{f}: \hat{X} \to Z$.
By the construction, we have the following:\\
(1) Note that $B_j|_{X_{\bar{\eta}}}$ is contained in finitely many fibers of the Iitaka fibration $I_{\bar{\eta}}: X_{\bar{\eta}} \to C_{\bar{\eta}}$, which implies that $\kappa(\tilde{X}_{\bar{\eta}}, (K_{\tilde{X}} + \tilde{B})|_{X_{\bar{\eta}}}) = 1$.
Since the restriction $(K_{\hat{X}} + \hat{B})|_{\hat{X}_{\bar{\eta}}}$ is semi-ample by \cite[Theorem 1.2]{Tan14}, it induces an elliptic fibration on $\hat{X}_{\bar{\eta}}$ by construction.
So applying Lemma \ref{l-linear-pullback} again, we get the following commutative diagram
$$\centerline{\xymatrix{
&\hat{X}'\ar[d]_{\hat{h}}\ar[r]^{\hat{\sigma}} &\hat{X}\ar[d]^{\hat{f}}  \\
&\hat{Y}\ar[r]^{\hat{g}} &Z
}}$$
where $\hat{Y}$ is a smooth projective surface and $\hat{h}$ is an elliptic fibration such that, $\hat{\sigma}^*(K_{\hat{X}} + \hat{B}) \sim_{\mathbb{Q}} \hat{h}^*\hat{D}$ where $\hat{D}$ is a nef and $\hat{g}$-big divisor on $\hat{Y}$.\\
(2)We claim that $\nu(K_{\hat{X}} + \hat{B}) = \nu(\hat{D}) = 1$. Indeed, otherwise, $\hat{D}$ will be big. Note that the divisor $\mu^*\sum_j B_j  - \tilde{B}$ is effective and $\mu^*\sum_j B_j \sim \mu^*nmK_X - \sum_j\tilde{f}^*L_j$. Then applying Theorem \ref{ct} we can get a contradiction as follows
\begin{equation*}
\begin{split}
2 = &\kappa(\hat{Y}, \hat{D}) =\kappa(\hat{X}', \hat{\sigma}^*(K_{\hat{X}} + \hat{B})) = \kappa(\hat{X}, K_{\hat{X}} + \hat{B}) = \kappa(\tilde{X}, K_{\tilde{X}} + \tilde{B})\\
\leq &\kappa(\tilde{X}, K_{\tilde{X}} + \mu^*nmK_X - \sum_j\tilde{f}^*L_j)\\
= &\kappa(\tilde{X}, \mu^*K_X + E + \mu^*nmK_X - \sum_j\mu^*f^*L_j) \\
= &\kappa(X, (nm+ 1)K_X - \sum_jf^*L_j) = 1.
\end{split}
\end{equation*}
(3) For sufficiently divisible $M$ and every $1 \leq i \leq n$, we get an effective Cartier divisor
$$\tilde{\Gamma}_i = M(\mu^*B_i + mE) + Mm\tilde{B}  \sim M(mK_{\tilde{X}} - \tilde{f}^*L_i) + Mm\tilde{B}.$$
Denote by $\nu: \tilde{X} \dashrightarrow \hat{X}$ the natural birational map. Let $\hat{\Gamma}_i = \nu_*\tilde{\Gamma}_i$. Then
$$\hat{\Gamma}_i \sim M(mK_{\hat{X}} - \hat{f}^*L_i) + Mm\hat{B} \sim Mm(K_{\hat{X}} + \hat{B}) - M\hat{f}^*L_i.$$
Since $E$ is contained in finitely many fibers of $\tilde{f}$, $\nu_*E$ is contracted by $\hat{f}$. So if a component of $\hat{\Gamma}_i$ is dominant over $Z$ then it is contained in $\hat{B}$.\\
(4) Take an effective divisor $\hat{D}_i \sim Mm\hat{D} - M\hat{g}^*L_i$ for each $i$.
Since $\hat{D}$ is nef and $\hat{D}^2 =0$, so is $\hat D_i$.
Considering connected components of the union of the $\hat D_i, 0 \leq i \leq n$, we see that there exist nef effective Cartier divisors $\hat D_1',\ldots,\hat D_k'$ satisfying the conditions below:
\begin{itemize}
\item $\mathrm{Supp}(\hat{D}_j')$ is connected for each $j$, and $\mathrm{Supp}(\hat{D}_j')\cap\mathrm{Supp}(\hat{D}_l')=\emptyset$ for each $j\neq l$;
\item $(\hat{D}_1')^2=\cdots=(\hat{D}_k')^2=0$;
\item the greatest common divisor of the coefficients of every $\hat{D}_j'$ is equal to one;
\item for each $i$, there exist $a_{i1},\ldots,a_{ik} \in \mathbb{Z}^{\geq 0}$ such that $\hat{D}_i = a_{i1}\hat{D}_1' + \cdots + a_{ik}\hat{D}_k'$.
\end{itemize}
Note that at least one of the $\hat{D}_j'$ is dominant over $Z$, and hence intersects every fiber of $\hat{f}$.
From this we see that every $\hat{D}_j'$ is dominant over $Z$. Indeed, if a $\hat D_j'$ is contained in one fiber, then the support of $\hat D_j'$ is equal to the whole fiber as shown by \cite[VIII.4]{Beau96}, which contradicts to the first condition above.
Now we have $\hat{\sigma}^*\hat{\Gamma}_i = \hat{h}^*\hat{D}_i$ by the construction.
Hence $\hat h^*\hat D_1',\ldots,\hat h^*\hat D_k'$ are disjoint connected components of $\hat \sigma^*(\sum \hat\Gamma_i)$.
Let $\hat G_j:=\hat\sigma_*\hat h^*\hat D_j'$. Then we have that $\mathrm{Supp}(\hat G_j)\cap\mathrm{Supp}(\hat G_l)=\emptyset$ for each $j\neq l$. \\
(5)
Take two divisors $\hat{D}_1, \hat{D}_2$. Since $\hat{D}_1\ne\hat D_2$, we may assume that $a_{11}>a_{21}\geq0$.
We may further assume that $a_{22}>a_{12}\geq0$ because $\hat{D}_1 \sim_{num} \hat{D}_2$.
We can write that
$$\hat\Gamma_1 = a_{11}\hat{G}_1 + a_{12}\hat{G}_2 + \hat{G}'_3~\mathrm{and}~ \hat\Gamma_2 = a_{21}\hat{G}_1 + a_{22}\hat{G}_2 + \hat{G}''_3$$
where neither of $\hat{G}_1$ and $\hat{G}_2$ intersects $\hat{G}'_3 \cup \hat{G}''_3$. \\

Step 4: Take two reduced, irreducible and dominant over $Z$ components $\hat{C}_1, \hat{C}_2$ of $\hat{G}_1, \hat{G}_2$ respectively.
Then $\hat{C}_1, \hat{C}_2$ are contained in $\hat{B}$ by the construction of $\hat\Gamma_i$ in Step 3 (3). Since $(\hat{X}, \hat{B})$ is dlt and $\hat{B}$ is a reduced integral divisor, so $\hat{C}_1, \hat{C}_2$ are log canonical centers of $(\hat{X}, \hat{B})$. By Step 3 (4), since $\hat{D}$ is nef and $\hat{D}\cdot\hat{D}_i = 0$, so $\hat{D}|_{\hat{D}_i} \sim_{num} 0$. For $j=1,2$, since $\hat{h}(\hat{\sigma}^{-1}\hat{C}_j)$ is a component of some $\hat{D}_i$, by
$\hat{\sigma}^*(K_{\hat{X}} + \hat{B}) \sim_{\Q} \hat{h}^*\hat{D}$, we conclude that
$$\hat{\sigma}^*(K_{\hat{X}} + \hat{B})|_{\hat{\sigma}^{-1}\hat{C}_j} \sim_{num} 0.$$
Denote by $\hat{C}'_i$ the normalization of $\hat{C}_i$. Then $(K_{\hat{X}} + \hat{B})|_{\hat{C}'_i} \sim_{num} 0$, thus
$(K_{\hat{X}} + \hat{B})|_{\hat{C}'_i} \sim_{\mathbb{Q}} 0$ by Lemma \ref{adj}.
Therefore,
\begin{equation}
\begin{split}
-a_{21}M\hat{f}^*L_1|_{\hat{C}'_1} &\sim_{\mathbb{Q}} a_{21}(Mm(K_{\hat{X}} + \hat{B}) - M\hat{f}^*L_1)|_{\hat{C}'_1}\\
&\sim_{\mathbb{Q}} a_{21}\hat{\Gamma}_1|_{\hat{C}'_1} \sim_{\mathbb{Q}} a_{11}a_{21}\hat{G}_1|_{\hat{C}'_1}\\
&\sim_{\mathbb{Q}} a_{11}\hat{\Gamma}_2|_{\hat{C}'_1} \sim_{\mathbb{Q}} -a_{11}M\hat{f}^*L_2|_{\hat{C}'_1}
\end{split}
\end{equation}
which, by Lemma \ref{injofpic}, implies that
$$a_{21}ML_1 \sim_{\mathbb{Q}} a_{11}ML_2.$$
In the same way, restricting on $\hat{C}'_2$ gives
$$a_{22}ML_1 \sim_{\mathbb{Q}} a_{12}ML_2.$$
Finally by conditions $a_{11} > a_{21}$ and $a_{12}<a_{22}$, we conclude that $L_1 \sim_{\mathbb{Q}} L_2 \sim_{\mathbb{Q}} 0$, and this completes the proof.
\end{proof}
%
%
%

\end{document}